\documentclass[11pt]{article}

\headheight=8pt     \topmargin=0pt \textheight=624pt
\textwidth=432pt \oddsidemargin=18pt \evensidemargin=18pt

%%% Make bibliography shorter
\usepackage[square,sort,comma,numbers]{natbib}
\setlength{\bibsep}{0.0pt}

\usepackage{amsfonts}
\usepackage{amsmath,amssymb}
\usepackage{amscd}
\usepackage{amsthm}
\usepackage{fancyhdr}
\usepackage{color}
\usepackage{hyperref} 

\theoremstyle{plain}
\newtheorem{lem}{Lemma}[section]
\newtheorem{thm}{Theorem}
\newtheorem{clm}{Claim}

\newtheorem{cly}[lem]{Corollary} 
\newtheorem{prop}[lem]{Proposition}

%\newtheorem{lem}[thm]{Lemma} 
%\newtheorem{cly}{Corollary}[section]
%\newtheorem{prop}{Proposition}[section]
%\newtheorem{Ans}{Ansatz}[section]
%%%The stuff below will appear in roman letters, while the stuff above in italics 
\theoremstyle{definition}
\newtheorem{df}[lem]{Definition} 

\newtheorem{nota}[lem]{Notation} 

\theoremstyle{remark}
\newtheorem{rem}[lem]{Remark}
%\newtheorem{df}{Definition}[section]
%\newtheorem{rem}{Remark}[section]
%\newtheorem{ex}{Example}[section]
%\newtheorem{nota}{Notation}[section]

%\usepackage{showkeys}
%\usepackage{hyperref}

%\newcommand {\mcomment}[1]{{\marginpar{\textcolor{blue}{\Huge{$\dagger$}}}\scriptsize{\bf Marco:}\scriptsize{\ #1 \ }}}
%\newcommand {\ycomment}[1]{{\marginpar{\textcolor{red}{\Huge{$\star$}}}\scriptsize{\bf Yael:}\scriptsize{\ #1 \ }}}

%\newcommand {\ycomment}[1]{{\marginpar{*}\scriptsize{\bf Yael:}\scriptsize{\ #1 \ }}}
%\newcommand {\mcomment}[1]{{\marginpar{*}\scriptsize{\bf Marco:}\scriptsize{\ #1 \ }}}
%%%%%%%%%%%%%%%%%%%%%%%%%%%%MARCOs Macros%%%%%%%%%%%%

\newcommand{\ZZ}{\ensuremath{\mathbb Z}}

\newcommand{\RR}{\ensuremath{\mathbb R}}

\newcommand{\cL}{\mathcal{L}}
\newcommand{\cD}{\mathcal{D}}
\newcommand{\cF}{\mathcal{F}}

\newcommand{\cY}{\mathcal{Y}}
\newcommand{\ga}{\mathfrak{a}}
\newcommand{\g}{\mathfrak{g}}

\newcommand{\pd}[1]{\frac{\partial}{\partial #1}} %\pd{x}

\newcommand{\li}{\ensuremath{L_{\infty}}}
%\title{$L_{\infty}$ algebras and higher analogues of Courant algebroids}  
%\author{Marco Zambon}
%\address{Universidade do Porto, Departamentos de Matematica Pura, Rua do Campo Alegre 687, 4169-007 Porto, Portugal} \email{mzambon@fc.up.pt}
%\date{\today}
%\thanks{2000 Mathematics Subject Classification:   primary,  secondary   .}
%\begin{abstract}
%\end{abstract}
%\thanks{ 
%M.Z. was partially supported by CMUP and
%financed by FCT through the programs POCTI, POSI and Ciencia 2007.
%}
%\maketitle
%\tableofcontents
%%%%%%%%%%%%%%%%%%%%%%%%%%%%%%%%%%%%%%%%%%%%%%%%%%%%%%%%5

%\setcounter{tocdepth}{3}

\title{Simultaneous deformations of algebras and morphisms via derived brackets}

\author{Ya\"el Fr\'egier\footnote{{  UArtois, LML, F-62\kern 1mm 300, Lens, France.}}\;\footnote{{MIT, 77 Mass. Avenue, Cambridge, MA 02139, USA.}}
\footnote{Universit\"at Z\"urich, Winterthurerstr. 190, CH-8057 Z\"urich, Switzerland. \texttt{yael.fregier@gmail.com}}
\and 
Marco Zambon\footnote{{ Universidad Aut\'onoma de Madrid }
{ (Departamento de Matem\'aticas) and}
{  ICMAT (CSIC-UAM-UC3M-UCM),}
{ Campus de Cantoblanco,}
{ 28049 Madrid, Spain. }
Email: 
\texttt{marco.zambon@uam.es,} \texttt{marco.zambon@icmat.es}. 
Current address: KU Leuven, Department of Mathematics, Celestijnenlaan 200B box 2400, BE-3001 Leuven, Belgium.
{\texttt{marco.zambon@wis.kuleuven.be}}
} 
 }

%\author{Ya\"el Fr\'egier \\{\small Univ. Lille Nord de France F-59\kern 1mm %000 LILLE, FRANCE}\\
%{\small  UArtois, LML F-62\kern 1mm 300 LENS, FRANCE}\\
%\vspace{0.5cm}
%\texttt{yael.fregier@gmail.com}\\
%Marco Zambon \\{\small Universidad Aut\'onoma de Madrid }\\
%{\small (Departamento de Matem\'aticas) and}\\
%{\small  ICMAT(CSIC-UAM-UC3M-UCM)}\\
%{\small Campus de Cantoblanco }\\
%{\small 28049 Madrid, Spain}\\
%\texttt{marco.zambon@uam.es}\\ \texttt{marco.zambon@icmat.es} 
% }

\begin{document}

\date{}

\maketitle
\begin{abstract}
We present a method to construct explicitly $L_\infty$-algebras governing simultaneous deformations of various kinds of algebraic structures and of their morphisms. It is an alternative to the heavy use of the operad machinery of the existing approaches.   Our method relies on Voronov's derived bracket construction.
\end{abstract}

\setcounter{tocdepth}{2} %doesn't display subsections in TOC 
%\tableofcontents

\section*{Introduction}
\addcontentsline{toc}{section}{Introduction}
 
The deformation theory of various kinds of structures (e.g. \cite{Ger}, \cite{KoSp1} and \cite{Ku}) can be encapsulated in the language of graded Lie algebras (\cite{NijRic1} and \cite{NijRic2}) or more generally, for non quadratic structures, of $L_\infty$-algebras (\cite{LadaStash}).   

 It is convenient to have such a formulation since cohomology theory, analogues of Massey products and a natural equivalence relation on the space of deformations come along for free. However obtaining such formulation -- that is, obtaining the $\li$- algebra governing the given deformation problem -- can be difficult. \\

There are known techniques (\cite{FMY}) %and \cite{MV}) 
   to solve this problem in the case of simultaneous deformations of {various kinds of } algebras and their morphisms, but they are based on the formalism of operads, which provides an obstacle 
%to be used easily by 
to mathematicians not acquainted with operad theory. 

 On the other hand, T. Voronov, building on Y. Kosmann-Schwarzbach's derived brackets (\cite{yvette1}), developed techniques enabling to produce $L_\infty$-algebras out of some simple concepts of  graded linear algebra (\cite{Vor} and \cite{vor2}). In our work \cite{FZgeo} we showed how to adapt Voronov's results to the study of simultaneous deformations, and gave geometrical applications which could not be obtained otherwise.\\

In this paper we show that this approach also applies successfully to simultaneous deformations of algebras and morphisms and that this can be an alternative approach for users not willing to use the operadic formalism.\\

%\mcomment{I would remove:
%Note that the graded Lie algebras appearing in our V-data {can be seen as} coderivations of certain coalgebras built from Koszul duality for operads (see \S \ref{Koszul}).}

\noindent\textbf{Outline of the content of the paper.}   

\noindent In  \S\ref{sec:main} we recall the formalism of graded Lie algebras and $\li$-algebras together with the  \emph{derived bracket constructions} (see  Thm. \ref{voronovderived} and  \ref{voronov}) and {the tool we use to study simultaneous deformations}  (Thm. \ref{machine}). In \S \ref{Lie} we give algebraic applications to the study of simultaneous deformations of algebras and morphisms in the categories of Lie and $L_\infty$-algebras. Another application concerns Lie subalgebras of Lie algebras.% More generally, we show in \S \ref{Koszul} how this works in general for algebras and morphisms over any Koszul operad.\\
%In the appendix we   provide background material on graded and formal geometry.\\ 

\section{$L_{\infty}$-algebras  via derived brackets and   Maurer-Cartan elements}\label{sec:main}

 {We recall the machinery we developed in \cite[\S1]{FZgeo} (which   {first appeared as} \cite[\S1]{YaelZ})}. The main result is Thm. \ref{machine}, which produces the $L_\infty$-algebras appearing in the rest of the article. We first give some basic material about $L_\infty$-algebras in \S \ref{homolie}, then we recall in \S \ref{vorpaper} Voronov's constructions which will be the main tools used to establish in \S \ref{heart} our Theorem \ref{machine}. We conclude justifying in \S\ref{sec:conv} why no convergence issues arise in our machinery, and discussing equivalences in \S\ref{sec:sym}.

{
We refer the reader to \cite[\S1]{FZgeo}  for additional  details and proofs (an exception being Lemma \ref{lem:eps}, which we prove here)}.
\subsection{Background on $L_{\infty}$-algebras}\label{homolie}

 {The notion of  $L_\infty$-algebra is  due to Lada and Stasheff \cite{LadaStash}, and contains graded Lie algebras and differential graded Lie algebras (DGLAs) as special cases. We will need only a ``shifted'' version of this notion, in which all the multibrackets are graded symmetric have degree one. We refer to the latter as $\li[1]$-algebras.}

 To introduce it, recall that given two elements $v_1, v_2$ in a graded vector space, the \emph{Koszul sign}   of the transposition $\tau_{1,2}$ of these two elements is $\epsilon(\tau_{1,2},v_1,v_2):=(-1)^{\vert v_1\vert \vert v_2\vert}.$ This definition is extended to an arbitrary permutation using a its decomposition into transpositions. 
%We will often abuse the notation $\epsilon(\sigma,v_1, \dots, v_n)$ by writing $\epsilon(\sigma)$,  and 
%We define $\chi(\sigma):=\epsilon(\sigma)(-1)^{\sigma}$. 

Recall further that   $\sigma\in S_n$ is called an $(i, n-i)$-unshuffle if it satisfies $\sigma(1)<\dots<\sigma(i)$ and $\sigma(i+1)<\dots<\sigma(n).$ The set of $(i, n-i)$-unshuffles is denoted by $S_{(i, n-i)}.$

\begin{df}\label{li1}(\cite[Def. 5]{KajSta})
An \emph{$L_\infty[1]$-algebra} is a graded vector space $W=\bigoplus_{i\in \ZZ}W_i$ equipped with a collection  ($k\ge1$) of linear maps $m_k \colon \otimes^kW\longrightarrow W$ of degree $1$ satisfying, for every collection of homogeneous elements $v_1, \dots, v_n \in W$:\begin{itemize}
\item[1)] graded  symmetry: for every $\sigma \in S_n$
$$m_n(v_{\sigma(1)}, \dots, v_{\sigma(n)})=\epsilon(\sigma)m_n(v_1,\dots,v_n),$$
\item[2)] relations:  for all $n\ge 1$
$$
%\sum_{i+j=n+1}
\sum_{\substack{i+j=n+1\\i,j\ge 1}}
 \sum_{\sigma\in S_{(i,n-i)}}\epsilon (\sigma)m_j(m_i(v_{\sigma(1)}, \dots,v_{\sigma(i)}),v_{\sigma(i+1)}, \dots, v_{\sigma(n)} )=0.$$
\end{itemize}
In a \emph{curved $L_\infty[1]$-algebra} one additionally allows for an element $m_0\in W_1$ (which can be understood as a   bracket with zero arguments),  one allows $i$ and $j$ to be zero in the relations   2), and one adds the relation corresponding to  $n=0$.   
\end{df}

\begin{rem} 
\label{desuspend}
There is a bijection between \li-algebra structures on a graded vector space $V$ and $\li[1]$-algebra structures on $V[1]$, the graded vector space defined by $(V[1])_i:=V_{i+1}$ \cite[Rem. 2.1]{Vor}. The multibrackets are related by applying the d\'ecalage isomorphisms

\begin{equation}\label{deca}
 (\otimes^n V)[n] \cong \otimes^n(V[1]),\;\; v_1\dots v_n \mapsto v_1\dots v_n\cdot (-1)^{(n-1)|v_{1}|+\dots+2|v_{n-2}|+|v_{n-1}|},
 \end{equation}
 where $|v_i|$ denotes the degree of $v_i\in V$. The bijection extends to the  curved case.
 \end{rem}
 
 %see also manetti Fiorenza
 From now on, for any $v\in V$, we denote by $v[1]$ the corresponding element in $V[1]$ (which has  degree $|v|-1$). Also, we denote the multibrackets in $\li[1]$-algebras by $\{\cdots\}$, we denote by $d:=m_1$ the unary bracket, and in the curved case we denote $\{\emptyset\}:=m_0$ (the   bracket with zero arguments).

\begin{df} Given an $L_\infty[1]$-algebra $W$, a \emph{Maurer-Cartan element} is a degree $0$ element $\Phi$ satisfying the 
 Maurer-Cartan equation
   \begin{equation}\label{MaCa}
 \sum_{n=1}^{\infty} \frac{1}{n!}\{\underbrace{\Phi,\dots,\Phi}_{n \text{ times}}\}=0.
\end{equation}
(We consider the convergence of this  infinite sum in \S \ref{sec:conv}). We denote by $MC(W)$ the set of Maurer-Cartan elements of $W$. 

If $W$ is a {curved} $L_\infty[1]$-algebra, we define Maurer-Cartan elements by adding $m_0\in W_1$ to the left hand side of eq. \eqref{MaCa} (i.e. by letting the sum in \eqref{MaCa} start at $n=0$).
\end{df}

%%%%%%%%%%%%%%%%%%%%%%%%%%%%%%%%%%%%%%%%%%%%%%%%%%%%%%%%%%%%%%%%%%%%%%%%%%%%%%%%%%

%%%%%%%%%%%%%%%%%%%%%%%%%%%%%%%%%%%%%%%%%%%%%%%%%%%%%%%%%%%%%%%%%%%%%%%%%%%%%%%%%%

\subsection{Th. Voronov's constructions of $L_\infty$-algebras as derived brackets}\label{vorpaper}
 % Here we introduce V-data and recall how Voronov associates $L_\infty[1]$-algebras to a V-data.
We recall Th. Voronov's derived bracket construction \cite{Vor}\cite{vor2}, which out of simple data constructs an $\li[1]$-algebra structure. 
 
\begin{df}\label{vdata}
 A \emph{V-data} consists of a quadruple $(L,\ga, P,\Delta)$ where 
 \begin{itemize}
\item  $L$ is a graded Lie algebra (we denote its bracket by $[\cdot,\cdot]$), 
\item $\ga$ an abelian  Lie subalgebra,  
\item $P \colon L \to \ga$ a projection whose kernel is a Lie subalgebra of $L$, 
 \item $\Delta \in Ker(P)_1$ an element such that $[\Delta,\Delta]=0$.
\end{itemize}
When  $\Delta$ is an arbitrary element of $L_1$ instead of $Ker(P)_1$, we refer to  $(L,\ga, P,\Delta)$
as a \emph{curved V-data}. 
\end{df}

\begin{thm}[{\cite[Thm. 1, Cor. 1]{Vor}}]\label{voronovderived}
 Let $(L,\ga, P, \Delta)$ be a curved V-data.
Then $\mathfrak{a}$ is a curved $L_\infty[1]$-algebra for the multibrackets   $\{\emptyset\}:=P\Delta$ and 
($n\ge 1$)
\begin{align}\label{eq:adp}
\{ a_1,\dots,a_n\}&=P[\dots[[\Delta,a_1],a_2],\dots,a_n].
\end{align} 
We obtain a  $L_\infty[1]$-algebra exactly when $\Delta \in Ker(P)$ .
\end{thm}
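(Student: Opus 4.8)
The plan is to verify Definition \ref{li1} directly for the multibrackets \eqref{eq:adp}, exploiting that the single hypothesis available is $[\Delta,\Delta]=0$ (together with the structural properties of the V-data: $\ga$ abelian, $\ker P$ a subalgebra, and $P$ a projection onto $\ga$). First I would establish graded symmetry: since $\ga$ is abelian and we land in $\ga$ via $P$, it suffices to note that the iterated bracket $[\dots[[\Delta,a_1],a_2],\dots,a_n]$, while not itself symmetric in $L$, becomes symmetric after applying $P$. Concretely, swapping two adjacent arguments $a_i,a_{i+1}$ changes the iterated bracket by a term of the form $[\dots[[\Delta,a_1],\dots],[a_i,a_{i+1}],\dots]$ (by the graded Jacobi identity), and one argues this correction term lies in $\ker P$; iterating adjacent transpositions then yields the Koszul-sign rule for an arbitrary $\sigma\in S_n$. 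Here the degree-one shift is essential: the maps $m_k$ have degree $1$ because $\Delta\in L_1$ contributes a degree and each further bracketing preserves degree, so $\{a_1,\dots,a_n\}$ has degree $|a_1|+\dots+|a_n|+1$ in $\ga$.

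Next I would treat the quadratic relations 2). The strategy is the standard one: introduce the operator $D:=[\Delta,-]$ on $L$, which squares to zero since $\frac12[D,D]=[\,[\Delta,\Delta],-\,]=0$ by the (graded) Jacobi identity and $[\Delta,\Delta]=0$. One then expands the left-hand side of relation 2) for fixed $n$ as a sum over unshuffles of nested $P$-projected iterated brackets, and shows the whole expression collapses. The key combinatorial identity is that $P$ applied to a long iterated bracket, with one block of arguments grouped inside an inner $P$-bracket, can be re-expressed — using that $\ker P$ is a Lie subalgebra and that $P$ is a projection (so $P\circ P=P$ and $\mathrm{id}-P$ maps into $\ker P$) — in terms of iterated brackets with no inner projection, plus correction terms. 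After summing over all unshuffles and all splittings $i+j=n+1$, these contributions cancel in pairs, precisely because $D^2=0$ and because elements of $\ga$ bracket trivially with each other. This is essentially Voronov's original computation, so I would either reproduce the bookkeeping or, more economically, invoke Theorem \ref{voronovderived}'s proof directly.

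The cleanest route, and the one I would actually take, is to \emph{deduce} the uncurved statement from the already-established curved statement of Theorem \ref{voronovderived}. Indeed, that theorem asserts $\ga$ is a curved $\li[1]$-algebra with $\{\emptyset\}=P\Delta$ and higher brackets \eqref{eq:adp}, for \emph{any} $\Delta\in L_1$. A curved $\li[1]$-algebra is an honest (uncurved) $\li[1]$-algebra if and only if $m_0=0$, i.e. the bracket with zero arguments vanishes; all the other axioms are literally the same in both cases once one restricts relation 2) to $i,j\ge 1$ and drops the $n=0$ relation. Since $m_0=P\Delta$, the condition $m_0=0$ reads $P\Delta=0$, that is, $\Delta\in\ker P$. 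Conversely, if $\Delta\in\ker P=\ker(P)_1$ then $P\Delta=0$ so the curved structure is uncurved, and one recovers exactly the hypotheses of a (non-curved) V-data. Hence $\ga$ is an $\li[1]$-algebra precisely when $\Delta\in\ker P$, which is the claim.

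The main obstacle, were one to prove relations 2) from scratch rather than quoting Theorem \ref{voronovderived}, is the careful handling of the inner projection $P$ in the nested brackets $m_j(m_i(\dots),\dots)$: one must justify that replacing $m_i(a_{\sigma(1)},\dots,a_{\sigma(i)})=P[\dots[\Delta,a_{\sigma(1)}],\dots,a_{\sigma(i)}]$ inside the outer iterated bracket is legitimate up to terms that either vanish (because $\ga$ is abelian, so two $\ga$-valued blocks bracket to zero) or reorganize (because $\ker P$ is an ideal-like subalgebra for the bracketing in question) into terms matching other unshuffle contributions. Getting the signs right across the décalage \eqref{deca} and the unshuffle reindexing is the delicate bookkeeping, but it is routine once the structural cancellation mechanism ($D^2=0$, $\ga$ abelian, $\ker P$ a subalgebra) is in place; since all of this is already subsumed in Theorem \ref{voronovderived}, the reduction in the previous paragraph is the proof I would present.
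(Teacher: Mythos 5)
This theorem is quoted from Voronov (\cite[Thm.~1, Cor.~1]{Vor}) and the paper supplies no proof of its own, so your chosen route --- taking the curved statement as the cited result and observing that the structure is uncurved precisely when $m_0=P\Delta=0$, i.e.\ $\Delta\in \ker P$ --- matches the paper's treatment, and your sketch of the direct verification (graded symmetry via Jacobi, relations via $D^2=0$ and $\ker P$ being a subalgebra) is the right mechanism. One small correction to your symmetry argument: the correction term $[\dots,[a_i,a_{i+1}],\dots]$ is not merely in $\ker P$ but identically zero, since $a_i,a_{i+1}$ lie in the abelian subalgebra $\ga$, so the iterated bracket is already graded symmetric in $L$ before $P$ is applied.
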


%The assumptions of the next theorem are the same as those of Thm. \ref{voronovderived}, expept for the fact that the derivation $D$ is not required to be inner.
%Thm. \ref{voronovderived} extends to:
When $\Delta \in Ker(P)$  there is actually a larger  $L_\infty[1]$-algebra, which contains $\ga$ as in Thm. \ref{voronovderived} as a $L_\infty[1]$-subalgebra.

\begin{thm}[{\cite[Thm. 2]{vor2}}]\label{voronov}
%Let $L$ be a graded Lie algebra with a decomposition\footnote{Direct sum of graded vector spaces.}  $L=K\oplus \ga$ such that $K$ is a graded Lie subalgebra and  $\mathfrak{a}$ is an abelian subalgebra of $L$. Let
%$D$ be a  degree 1
%derivation of $L$ preserving $K$ with $D\circ D=0$, and denote by $P$ the projection on $\mathfrak{a}$ parallel to $K$.
Let $(L,\ga, P, \Delta)$ be a V-data, and denote $D:=[\Delta,\cdot] \colon L \to L$.
Then the space $L[1]\oplus \mathfrak{a}$ is a $L_\infty[1]$-algebra for the differential
\begin{equation}\label{diffV2}
d( x[1], a):= (- (Dx)[1], P(x+Da)),
\end{equation}
the binary bracket
$$\{x[1],y[1]\}=[x,y][1](-1)^{|x|}\in L[1],$$
and for $n\ge 1$:
\begin{align}\label{vorder}
\{ x[1],a_1,\dots,a_n\}&=P[\dots[x,a_1],\dots,a_n]\in \ga,\\
\label{vorderlong}\{a_1,\dots,a_n\}&=P[\dots[Da_1,a_2],\dots,a_n]\in \ga.
\end{align}
Here $x,y\in L$ and $a_1,\dots,a_n \in \mathfrak{a}$. Up to permutation of the entries, all the remaining multibrackets vanish.
\end{thm}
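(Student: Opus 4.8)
The plan is to deduce Theorem \ref{voronov} from the earlier and more elementary Theorem \ref{voronovderived} by a clever choice of \emph{curved} V-data built on an enlarged graded Lie algebra. Concretely, given a V-data $(L,\ga,P,\Delta)$, I would form the graded Lie algebra $\widehat{L}:=L[1]\rtimes L$ — the semidirect product in which $L$ acts on the abelian graded Lie algebra $L[1]$ via the adjoint action (so $[x[1],y[1]]=0$, $[x,y[1]]=[x,y][1]$ up to the décalage sign $(-1)^{|x|}$, and $[x,y]$ the old bracket). Inside $\widehat L$ one takes the abelian subalgebra $\widehat\ga:=L[1]\oplus\ga$, the projection $\widehat P(x[1],y):=(x[1],Py)$ onto it (its kernel is $\mathrm{Ker}(P)\subseteq L$, still a Lie subalgebra), and the distinguished element $\widehat\Delta:=(0,\Delta)\in \widehat L_1$. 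The key point is that $\widehat\Delta$ is \emph{not} in $\mathrm{Ker}(\widehat P)$ in general — it lands in $\ga$ under $\widehat P$ unless $P\Delta=0$ — so a priori this is only a curved V-data; but one checks $[\widehat\Delta,\widehat\Delta]=0$, and then applies Theorem \ref{voronovderived} to get a curved $L_\infty[1]$-structure on $\widehat\ga=L[1]\oplus\ga$.

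The next step is to compute the resulting multibrackets from formula \eqref{eq:adp} and match them with \eqref{diffV2}–\eqref{vorderlong}. The zero-bracket is $\widehat P\widehat\Delta=(0,P\Delta)$; on $L[1]$ this vanishes, and on $\ga$ it contributes the term $P(x)$ appearing in \eqref{diffV2} — wait, more precisely it is the component of $d$ in the $a$-slot, which combines with the others. For the unary bracket: $[\widehat\Delta,(x[1],a)]=( -(Dx)[1]\cdot(\pm), [\Delta,a])$; applying $\widehat P$ gives $(-(Dx)[1], P[\Delta,a])=(-(Dx)[1],P(Da))$, and adding the curvature term recovers exactly $d(x[1],a)=(-(Dx)[1],P(x+Da))$. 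For higher brackets one iterates: a bracket with a mix of one $x[1]$ and several $a_i$'s produces $P[\dots[x,a_1],\dots,a_n]$ because the first application of $\mathrm{ad}_{\widehat\Delta}$ to $x[1]$ turns it into $[\Delta,x][1]=-(Dx)[1]$ — hmm, I need to be careful about which entry carries the $L[1]$ versus $\ga$ label — and a bracket of $a_i$'s only gives $P[\dots[Da_1,a_2],\dots,a_n]$. The binary bracket $\{x[1],y[1]\}$ comes from the semidirect-product bracket on $\widehat L$, equalling $[x,y][1](-1)^{|x|}$ as stated. All mixed brackets with two or more $L[1]$-entries vanish because $L[1]$ is abelian in $\widehat L$ and $\mathrm{ad}_{\widehat\Delta}$ lowers the number of $L[1]$-entries by at most one.

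The main obstacle I anticipate is bookkeeping of the décalage and Koszul signs: verifying that the semidirect product $\widehat L=L[1]\rtimes L$ really is a graded Lie algebra with the signs that make $\{x[1],y[1]\}=[x,y][1](-1)^{|x|}$ come out correctly, and then tracking those same signs through the nested commutators $[\dots[\widehat\Delta,-],\dots,-]$ so that the output matches \eqref{vorder} and \eqref{vorderlong} on the nose rather than up to sign. A secondary subtlety is checking that $\mathrm{Ker}(\widehat P)$ is genuinely a Lie subalgebra of $\widehat L$ — this uses that $\mathrm{Ker}(P)$ is a Lie subalgebra of $L$ and that $[\mathrm{Ker}(P), L[1]]\subseteq L[1]\subseteq \widehat\ga$ does not interfere, so one should set up $\widehat P$ and its kernel with a little care. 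Once the V-data is verified and the brackets identified, Theorem \ref{voronovderived} does all the work of establishing the $L_\infty[1]$-relations, and the claim that $\ga$ sits inside as an $L_\infty[1]$-subalgebra is immediate since restricting the brackets \eqref{vorder}–\eqref{vorderlong} to arguments in $\ga$ (with no $x[1]$) reproduces exactly Voronov's original derived brackets \eqref{eq:adp}.
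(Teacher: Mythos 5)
This theorem is not proved in the paper at all: it is quoted verbatim from Voronov's \cite[Thm.~2]{vor2}, where it is established by a direct verification of the higher Jacobi identities. Your attempt to deduce it formally from Theorem \ref{voronovderived} via an enlarged (curved) V-data does not work, for several concrete reasons. First, $\widehat\ga:=L[1]\oplus\ga$ is \emph{not} an abelian subalgebra of $\widehat L=L[1]\rtimes L$: by your own definition of the semidirect product, $[a,x[1]]=\pm[a,x][1]$ for $a\in\ga\subset L$ and $x[1]\in L[1]$, and this is nonzero in general since $\ga$ is abelian but not central in $L$. Without an abelian target for $\widehat P$, Theorem \ref{voronovderived} simply does not apply. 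Second, every multibracket produced by Theorem \ref{voronovderived} is a derived bracket $\widehat P[\cdots[\widehat\Delta,\cdot],\cdots,\cdot]$; the restriction of the ambient Lie bracket to the abelian subalgebra is zero by definition, so your claim that $\{x[1],y[1]\}$ ``comes from the semidirect-product bracket on $\widehat L$'' is not available. Computing the actual derived bracket with $\widehat\Delta=(0,\Delta)$ gives $[[\widehat\Delta,x[1]],y[1]]=[[\Delta,x][1],y[1]]=0$ because $L[1]$ is abelian in your $\widehat L$, so you would obtain $\{x[1],y[1]\}=0$ instead of $[x,y][1](-1)^{|x|}$. Third, the term $P(x)$ in $d(x[1],a)=(-(Dx)[1],P(x+Da))$ cannot come from the curvature $\widehat P\widehat\Delta$: a $0$-ary bracket is a fixed element of degree $1$, not a map depending linearly on $x$, and in any case $P\Delta=0$ for a genuine (non-curved) V-data, so the curvature vanishes. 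Fourth, your mixed brackets also come out wrong: iterating $\mathrm{ad}$'s starting from $[\widehat\Delta,x[1]]=[\Delta,x][1]$ keeps you inside $L[1]$ and yields $\pm[\cdots[[\Delta,x],a_1],\cdots,a_n][1]$, not $P[\cdots[x,a_1],\cdots,a_n]\in\ga$ as required by \eqref{vorder}.

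The missing idea is that both the $P(x)$ component of the differential and the shifted bracket on $L[1]$ require an operator in $\widehat\Delta$ that \emph{removes} the shift, e.g.\ realizing $L[1]$ as $\varepsilon L\subset L\otimes\Lambda[\varepsilon]$ with $|\varepsilon|=-1$ and adjoining the derivation $d/d\varepsilon$ to $\widehat\Delta$, so that $[d/d\varepsilon,\varepsilon x]$ reproduces $x$ and the subsequent derived brackets can produce $P(x)$ and $[x,y][1]$. But even with that modification the subspace $\varepsilon L\oplus\ga$ is still not abelian (the obstruction $[a,\varepsilon x]=\pm\varepsilon[a,x]$ persists), which is precisely why Theorem \ref{voronov} is not a formal corollary of Theorem \ref{voronovderived} and has to be proved directly, as Voronov does in \cite{vor2}. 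If you want a proof rather than a citation, the honest route is to verify the generalized Jacobi identities of Definition \ref{li1} for the brackets \eqref{diffV2}--\eqref{vorderlong} by hand, using the graded Jacobi identity of $L$, $[\Delta,\Delta]=0$, and the fact that $\mathrm{Ker}(P)$ and $\ga$ are subalgebras.
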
 

\begin{nota}
We will denote by $$\ga_{\Delta}^P$$ and by $$(L[1]\oplus \mathfrak{a})_{\Delta}^P$$ the $L_{\infty}[1]$-algebras produced by Thm. \ref{voronovderived} and \ref{voronov}.
We will also often consider the projection
\begin{equation}\label{pphi}
P_{\Phi} :=P \circ e^{[\cdot,\Phi]}\colon L \to \ga.
\end{equation}
\end{nota}

\begin{rem}\label{MCexp}  Let $(L,\ga, P, \Delta)$ be a curved V-data and
  $\Phi\in \ga_0$ as above. Then $\Phi$ is a Maurer-Cartan element of $\ga_{\Delta}^P$ if{f} 
\begin{equation}
P_\Phi \Delta=0,
\end{equation}
or equivalently $\Delta\in ker(P_{\Phi})$. This follows immediately from eq. \eqref{eq:adp}.
% and will be used repeatedly in the proof of Thm. \ref{machine}. 
\end{rem}

%We will denote by $(L[1]\oplus \mathfrak{a})_{\Delta}^P$ the $L_{\infty}[1]$-algebra structure  induced by $\Delta$ and $P$ as in Thm. \ref{voronov}.

\begin{rem}\label{kerPa}
Let $L'$ be a graded Lie subalgebra of $L$ preserved by $D$ (for example $L'=Ker(P)$). Then $L'[1]\oplus\ga$ is stable under the multibrackets of Thm. \ref{voronov}. We denote by $(L'[1]\oplus \mathfrak{a})_{\Delta}^P$ the induced $\li[1]$-structure. One stresses that it is essential to consider this restriction, since the natural inclusion $Ker(P)[1]\longrightarrow L[1]\oplus A$ is a $L_\infty$-map and  a quasi isomorphism. In particular, if we do not consider this restriction, every solution of the Maurer-Cartan equation is gauge equivalent to an element of $Ker(P)$.
%$Ker(P)[1]\oplus \ga$ is an $L_{\infty}$-subalgebra of $L[1]\oplus \mathfrak{a}$,
%since by assumption $ker(P)$ is a Lie subalgebra of $L$ preserved by $D$.
%This applies in particular to $L'=Ker(P)$  
\end{rem}

%%%%%%%%%%%%%%%%%%%%%%%%%%%%%%%%%%%%%%%%%%%%%%%%%%%%%%%%%%%%%%%%%%%%%%%%%%%%%%%%%%

%%%%%%%%%%%%%%%%%%%%%%%%%%%%%%%%%%%%%%%%%%%%%%%%%%%%%%%%%%%%%%%%%%%%%%%%%%%%%%%%%%

%%%%%%%%%% 
\subsection{{The main tool}}\label{heart}

The following   statement is the main tool we develop. See \cite[\S 1.3]{FZgeo} for its proof. {It is a statement about Maurer-Cartan elements of $\li[1]$-algebras that arise as in Thm. \ref{voronovderived}. In the applications, these Maurer-Cartan elements will be the objects of interest, since they will correspond to morphisms, subalgebras, etc.}

   \begin{thm}\label{machine}
Let $(L,\ga, P,\Delta)$ be a  filtered V-data and let $\Phi\in MC(\ga_{\Delta}^P)$. Then for all   $\tilde{\Delta}\in L_1$   and  $\tilde{\Phi}\in \ga_0$:
\begin{align*}
\begin{cases}
 [\Delta+\tilde{\Delta},\Delta+\tilde{\Delta}]=0  \\
\Phi+ \tilde{\Phi} \in MC(\ga_{\Delta+\tilde{\Delta}}^{P})\end{cases}
\Leftrightarrow
(\tilde{\Delta}[1],\tilde{\Phi}) \in MC\Big((L[1]\oplus \ga)_{\Delta}^{P_{\Phi}}\Big).
\end{align*} 
In this case, $\ga^P_{\Delta+\tilde{\Delta}}$  is a curved $\li[1]$-algebra. It is a  $\li[1]$-algebra exactly when $\tilde{\Delta}\in Ker(P)$.
\end{thm}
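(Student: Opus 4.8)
The plan is to leverage Voronov's two theorems (Thm.~\ref{voronovderived} and Thm.~\ref{voronov}) together with the exponential-conjugation trick encoded in $P_\Phi$, reducing everything to a computation of how derived brackets transform under the change $\Delta \rightsquigarrow \Delta+\tilde\Delta$ and $P \rightsquigarrow P_\Phi$. First I would observe that the data $(L,\ga, P_\Phi,\Delta)$ is again a (curved) V-data: since $\Phi\in\ga_0$ and $\ga$ is abelian, $e^{[\cdot,\Phi]}$ is a filtered Lie algebra automorphism of $L$ (here the \emph{filtered} hypothesis is what makes this exponential converge, which is the content of \S\ref{sec:conv}), it fixes $\ga$ pointwise, and it carries $\ker(P)$ to $\ker(P_\Phi)$; hence $P_\Phi$ is a projection onto $\ga$ with Lie-subalgebra kernel. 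By Remark~\ref{MCexp}, the hypothesis $\Phi\in MC(\ga_\Delta^P)$ says precisely $\Delta\in\ker(P_\Phi)$, so $(L,\ga,P_\Phi,\Delta)$ is an honest (uncurved) V-data and Thm.~\ref{voronov} applies to produce $(L[1]\oplus\ga)_\Delta^{P_\Phi}$.

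Next I would make precise the passage between the $\li[1]$-algebras built from $P$ and those built from $P_\Phi$. The key algebraic identity is that conjugating $\Delta$ by $e^{[\cdot,\Phi]}$ realizes the shift of Maurer--Cartan base point: for $\Psi\in\ga_0$ one has $P_\Phi[\dots[\Delta,\Psi],\dots,\Psi]$ reassembling, via the binomial-type identity for $e^{[\cdot,\Phi]}$ applied to nested brackets, into $P[\dots[\Delta,\Phi+\Psi],\dots,\Phi+\Psi]$ plus the lower terms; concretely this is the statement that $\Psi\mapsto\Phi+\Psi$ intertwines $MC(\ga_\Delta^{P_\Phi})$ with $MC(\ga_\Delta^{P})$, and more generally that the curved $\li[1]$-algebra $(\ga_{\Delta+\tilde\Delta}^{P})$ twisted by the MC element $\Phi$ is isomorphic to $\ga_{\Delta+\tilde\Delta}^{P_\Phi}$. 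Granting this, the left-hand side of the asserted equivalence becomes: $[\Delta+\tilde\Delta,\Delta+\tilde\Delta]=0$ together with $\tilde\Phi\in MC(\ga_{\Delta+\tilde\Delta}^{P_\Phi})$, i.e.\ by Remark~\ref{MCexp} (applied with $P_\Phi$ in place of $P$), $\Delta+\tilde\Delta\in\ker((P_\Phi)_{\tilde\Phi})$ together with the curvature-freeness of $\Delta+\tilde\Delta$.

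The heart of the matter is then the general fact, implicit in Voronov's framework, that $(\tilde\Delta[1],\tilde\Phi)$ is a Maurer--Cartan element of $(L[1]\oplus\ga)_\Delta^{Q}$ (for a V-data with projection $Q$) if and only if $\Delta+\tilde\Delta$ squares to zero and $\tilde\Phi$ is a Maurer--Cartan element of $\ga_{\Delta+\tilde\Delta}^{Q}$. One proves this by expanding the MC equation \eqref{MaCa} for the element $(\tilde\Delta[1],\tilde\Phi)$ using the explicit multibrackets \eqref{diffV2}, \eqref{vorder}, \eqref{vorderlong}: the $L[1]$-component of the MC equation collects to $-D\tilde\Delta[1]-\tfrac12[\tilde\Delta,\tilde\Delta][1] = -\tfrac12\big([\Delta+\tilde\Delta,\Delta+\tilde\Delta]-[\Delta,\Delta]\big)[1]$, which vanishes iff $[\Delta+\tilde\Delta,\Delta+\tilde\Delta]=0$; and, using this, the $\ga$-component telescopes (the terms with one $\tilde\Delta[1]$-entry and the terms with none combine by $D=[\Delta,\cdot]$) into exactly $\sum_{n\ge1}\tfrac1{n!}Q[\dots[D_{\Delta+\tilde\Delta}\tilde\Phi,\tilde\Phi],\dots,\tilde\Phi]$, the MC equation of $\tilde\Phi$ in $\ga_{\Delta+\tilde\Delta}^{Q}$. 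Applying this with $Q=P_\Phi$ and combining with the previous paragraph yields the stated equivalence. Finally, the last sentence — that $\ga_{\Delta+\tilde\Delta}^{P}$ is curved $\li[1]$, and uncurved exactly when $\tilde\Delta\in\ker(P)$ — is immediate from Thm.~\ref{voronovderived} applied to the (curved) V-data $(L,\ga,P,\Delta+\tilde\Delta)$, noting $\Delta\in\ker(P)$ already (it is part of the original V-data) so $\Delta+\tilde\Delta\in\ker(P)\Leftrightarrow\tilde\Delta\in\ker(P)$.

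I expect the main obstacle to be the bookkeeping in the second paragraph: verifying cleanly that conjugation by $e^{[\cdot,\Phi]}$ implements both the change of projection $P\to P_\Phi$ and the change of base point $\tilde\Phi\to\Phi+\tilde\Phi$ \emph{simultaneously}, with all Koszul signs from the d\'ecalage \eqref{deca} tracked correctly, and checking that the filtration hypothesis indeed guarantees convergence of every exponential and every infinite sum that appears (so that no analytic subtlety is swept under the rug). The combinatorial telescoping in the $\ga$-component of the MC equation is routine but sign-sensitive; I would organize it by grouping the $2^n$ unshuffle terms according to how many $\tilde\Delta[1]$-slots occur, exactly as in the proof of Thm.~\ref{voronov}.
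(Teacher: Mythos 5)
The paper does not actually prove Theorem~\ref{machine}: it defers the argument to \cite[\S 1.3]{FZgeo}, so there is no in-text proof to compare yours against. That said, your proposal is correct and follows the route the surrounding text clearly intends. Your base-point shift is exactly Remark~\ref{rem:1)} applied with $\Delta+\tilde{\Delta}$ in place of $\Delta$, and it is proved by the identity $P_{\Phi+\tilde{\Phi}}=(P_{\Phi})_{\tilde{\Phi}}$, which holds because $\ga$ is abelian so the two adjoint operators commute; your preliminary observation that $(L,\ga,P_{\Phi},\Delta)$ is an honest V-data (via Remark~\ref{MCexp} and the fact that $e^{[\cdot,\Phi]}$ is an automorphism fixing $\ga$ and carrying $\ker(P)$ to $\ker(P_\Phi)$, with convergence supplied by the filtration) is indeed necessary and correct. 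Your ``general fact'' in the third paragraph is the analogue of Cor.~\ref{pairMC} for nonzero $\Delta\in\ker(Q)$, and the expansion of the Maurer--Cartan equation of $(L[1]\oplus\ga)_{\Delta}^{Q}$ by the number of $\tilde{\Delta}[1]$-slots is right, including the sign $(-1)^{|\tilde{\Delta}|}=-1$ that makes the $L[1]$-component collect to $-\tfrac12[\Delta+\tilde{\Delta},\Delta+\tilde{\Delta}][1]$. One small correction: the $\ga$-component does not telescope to $\sum_{n\ge 1}\tfrac1{n!}Q[\dots[D_{\Delta+\tilde{\Delta}}\tilde{\Phi},\tilde{\Phi}],\dots,\tilde{\Phi}]$ alone --- the differential also contributes $Q\tilde{\Delta}$, which is precisely the curvature $m_0=Q(\Delta+\tilde{\Delta})$ of the curved algebra $\ga_{\Delta+\tilde{\Delta}}^{Q}$ (using $Q\Delta=0$), so the identification with the Maurer--Cartan equation of $\tilde{\Phi}$ in that \emph{curved} $\li[1]$-algebra (whose sum by definition starts at $n=0$) still goes through, but your displayed formula omits this term and should be amended accordingly.
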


\begin{rem}\label{rem:1)} 
For any   $\tilde{\Phi}\in \ga_0$ we have
 $$\Phi+ \tilde{\Phi} \text{ is a MC element of }\ga_{\Delta}^{P}
\;\;\;\Leftrightarrow \;\;\;
\tilde{\Phi} \text{ is a MC element of }\ga_{\Delta}^{P_{\Phi}}.$$
This is a well-known statement,  
saying that perturbations of a Maurer-Cartan element of $\ga_{\Delta}^{P}$ satisfy themselves a Maurer-Cartan equation, and is a particular case of the  equivalence appearing in Thm. \ref{machine} (obtained setting $\tilde{\Delta}=0$).
\end{rem}

In the special case in which $\Delta=0$ and $\Phi=0$, we obtain the following corollary about the space  of {curved}
 $\li[1]$-algebra structures arising as in Thm. \ref{voronovderived} and Maurer-Cartan elements in there:

\begin{cly}\label{pairMC} Let $L,\ga, P$ such that  
$(L,\ga, P,0)$ is a  filtered V-data.  
The  only non-vanishing  multibrackets of  $(L[1]\oplus \ga)^P_0$, up to permutations of the entries, are
\begin{align*}
d(x[1])&= Px,\\
\{x[1],y[1]\}&=[x,y][1](-1)^{|x|},\\
\{ x[1],a_1,\dots,a_n\}&=P[\dots[x,a_1],\dots,a_n]\;\;\;\;\;\; \text{for all }n\ge 1
\end{align*}
where  $x,y\in L$ and $a_1,\dots,a_n \in \mathfrak{a}$. 

Its Maurer-Cartan elements are characterized by: 
for all  $\tilde{\Delta}\in L_1$  and  $\tilde{\Phi}\in \ga_0$
$$ \begin{cases}
[\tilde{\Delta},\tilde{\Delta}]=0 \\
\tilde{\Phi} \text{ is a MC element of }\ga_{\tilde{\Delta}}^{P} 
\end{cases}
\Leftrightarrow 
(\tilde{\Delta}[1],\tilde{\Phi}) \text{ is a MC element of } (L[1]\oplus \ga)^P_0.
$$ 
\end{cly}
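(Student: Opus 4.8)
The plan is to derive this corollary as the specialization of Theorem \ref{machine} to $\Delta = 0$ and $\Phi = 0$, so that the only work is to check the hypotheses of that theorem and to simplify the formulas occurring in it.

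First I would verify the hypotheses. By assumption $(L, \ga, P, 0)$ is a filtered V-data, so it only remains to see that $\Phi := 0$ is a Maurer-Cartan element of $\ga_0^P$. But by Theorem \ref{voronovderived} the multibrackets of $\ga_0^P$ are $\{\emptyset\} = P\cdot 0 = 0$ and, for $n \ge 1$, $\{a_1, \dots, a_n\} = P[\dots[[0, a_1], a_2], \dots, a_n] = 0$; hence $\ga_0^P$ is the abelian $\li[1]$-algebra on the graded vector space $\ga$, and every degree-zero element — in particular $0$ — satisfies the Maurer-Cartan equation \eqref{MaCa} trivially. So Theorem \ref{machine} does apply with $\Delta = 0$ and $\Phi = 0$.

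Next I would simplify the output of Theorem \ref{voronov} for the V-data $(L, \ga, P, 0)$. Here $D = [0, \cdot]$ is the zero map, so \eqref{diffV2} gives $d(x[1], a) = (0, Px)$, the binary bracket $\{x[1], y[1]\} = [x,y][1](-1)^{|x|}$ is unchanged, \eqref{vorder} gives $\{x[1], a_1, \dots, a_n\} = P[\dots[x, a_1], \dots, a_n]$ as before, and \eqref{vorderlong} gives $\{a_1, \dots, a_n\} = P[\dots[Da_1, a_2], \dots, a_n] = 0$. Thus, up to permutation of the entries, the only possibly non-vanishing multibrackets of $(L[1] \oplus \ga)_0^P$ are exactly the ones displayed in the statement. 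Moreover $P_\Phi = P_0 = P \circ e^{[\cdot, 0]} = P$, so in this case $(L[1] \oplus \ga)_\Delta^{P_\Phi}$ is literally $(L[1] \oplus \ga)_0^P$.

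Finally I would substitute $\Delta = 0$, $\Phi = 0$ and $P_\Phi = P$ into the equivalence of Theorem \ref{machine}: its left-hand conditions $[\Delta + \tilde\Delta, \Delta + \tilde\Delta] = 0$ and $\Phi + \tilde\Phi \in MC(\ga_{\Delta + \tilde\Delta}^P)$ become $[\tilde\Delta, \tilde\Delta] = 0$ and $\tilde\Phi \in MC(\ga_{\tilde\Delta}^P)$, while its right-hand side becomes $(\tilde\Delta[1], \tilde\Phi) \in MC((L[1] \oplus \ga)_0^P)$, which is precisely the asserted characterization. Since the argument is a pure specialization, I do not expect any genuine obstacle; the only points needing a moment's care are the (trivial) check that $\Phi = 0$ lies in $MC(\ga_0^P)$, and the bookkeeping remark that setting $D = 0$ is exactly what makes the family \eqref{vorderlong} vanish, so that it disappears from the list.
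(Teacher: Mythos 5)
Your proposal is correct and is exactly the paper's route: the corollary is obtained by specializing Theorem \ref{machine} to $\Delta=0$ and $\Phi=0$, observing that $0\in MC(\ga_0^P)$ trivially, that $P_\Phi=P$, and that $D=0$ kills the brackets \eqref{vorderlong} while leaving the others as displayed. No gaps.
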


\subsection{Convergence issues}\label{sec:conv}

The left hand side of the Maurer-Cartan equation \eqref{MaCa} is generally an infinite sum.
In this subsection we review Getzler's notion of filtered $\li[1]$-algebra \cite{GetzlerGoe}, which guarantees that the above infinite sum converges. We show that simple assumptions on   V-data ensure that the Maurer-Cartan equations of the (curved) $\li[1]$-algebras we construct in   Thm. \ref{machine} do converge.

\begin{df}
Let $V$ be a graded vector space.  A \emph{complete filtration}  is   a descending filtration by graded subspaces
$$V=\cF^{-1}V\supset\cF^0V\supset \cF^1V\supset \dots$$
such that the canonical projection
$V \to \underset{\leftarrow}{\lim} V/\cF^nV$ {is an isomorphism}.  {Here $$\underset{\leftarrow}{\lim} V/\cF^nV {:=}\{\overset{\rightarrow}{x}\in {\Pi_{n\geq -1}}V/\cF^nV\;:\; P_{i,j}({x_j)=x_i} \text{ when }  i<j\},$$ where $P_{i,j} \colon  V/\cF^jV\longrightarrow V/\cF^iV$ is the canonical {projection} induced by the inclusion ${\cF^jV\subset \cF^i V}$.}
\end{df}

 We define \emph{Maurer-Cartan elements} to be  $\Phi\in W_0\cap \cF^1W$ for which   the left hand side of eq. \eqref{MaCa}   vanishes.

\begin{df}\label{triple}
{Let $(L,\ga, P,\Delta)$ be a curved V-data (Def. \ref{vdata}). 
%Let $L,\ga, P$ be as in Def. \ref{vdata}.
 %(we refer to this triple as \emph{V-algebra}). 
 We say that this {curved V-data} is }
%We say that this triple is
\emph{filtered} if  
 there exists a complete filtration on $L$ 
such that
\begin{itemize}
\item[a)] The Lie bracket has filtration degree zero, i.e. $[\cF^iL,\cF^jL]\subset \cF^{i+j}L$ for all $i,j \ge -1$,
\item[b)] $\ga_0 \subset \cF^1L$,
\item[c)] the projection $P$ has filtration degree zero, i.e. $P(\cF^iL)\subset 
\cF^iL$ for all $i\ge -1$.
%\item[d)] $\Delta \in \cF^0L$.
\end{itemize}
\end{df}

%nrt{The following proposition is used in the proofs of Prop. \ref{machin}} and Thm. \ref{machine}}

 See \cite[\S 1.3]{FZgeo} for the proof of the following proposition.

\begin{prop}\label{prop:fil}
 Let $(L,\ga, P,\Delta)$ be a 
 %filtered\footnote{If one removes condition d) in Def. \ref{triple}, the conclusions of this proposition about convergence  of the sum \eqref{eq:mclhs} still hold, even though the relevant curved $\li[1]$-algebras will no longer be filtered (their multibrackets will have filtration degree $-1$).} 
{filtered,}  curved {V-data}.
% with $L,\ga, P$ filtered. 
  Then for every $\Phi\in MC(\ga_{\Delta}^P)\subset \ga_0$:
 % such that $[\Delta,\Delta]=0$:
\begin{itemize} 
\item[1)] the projection $P_{\Phi} :=P \circ e^{[\cdot,\Phi]}\colon L \to \ga$ is  well-defined and has filtration degree zero.
\item[2)]  the curved $\li[1]$-algebra   $\ga_{\Delta}^{P_{\Phi}}$ given by Thm. \ref{voronovderived} is filtered by $\cF^n\ga:=\cF^nL\cap \ga$. Further, the sum on the left hand side of eq. \eqref{MaCa}   converges for any degree zero element $a$ of $\ga$.

\item[3)] if $\Delta \in ker(P)$: the   $\li[1]$-algebra $(L[1]\oplus \mathfrak{a})_{\Delta}^{P_{\Phi}}$ given by Thm. \ref{voronov} is filtered by $\cF^n(L[1]\oplus \mathfrak{a}):=(\cF^nL)[1]\oplus \cF^n\ga$. Further, the sum on the left hand side of eq. \eqref{MaCa}  converges for any degree zero element $(x[1],a)$ of $L[1]\oplus \ga$.
\end{itemize}
\end{prop}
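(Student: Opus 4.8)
The plan is to apply Theorems \ref{voronovderived} and \ref{voronov} not to the given data but to the modified quadruple $(L,\ga,P_\Phi,\Delta)$, and then to track filtration degrees by hand. I begin with item 1). By b) we have $\Phi\in\ga_0\subset\cF^1L$, so by a) the inner derivation $[\cdot,\Phi]$ raises filtration degree by one; hence for $x\in\cF^iL$ the $k$-th summand of $e^{[\cdot,\Phi]}x=\sum_{k\ge 0}\tfrac1{k!}[\dots[x,\Phi],\dots,\Phi]$ lies in $\cF^{i+k}L$, the partial sums form a Cauchy sequence, and the series converges by completeness, with limit again in $\cF^iL$ (the $k=0$ term is $x$, all others lie in $\cF^{i+1}L$). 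Composing with $P$, which has filtration degree zero and image $\ga$ by c), yields $P_\Phi(\cF^iL)\subset\cF^iL\cap\ga=\cF^i\ga$, i.e.\ $P_\Phi$ is well-defined with filtration degree zero.

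Next I would observe that $(L,\ga,P_\Phi,\Delta)$ is again a curved V-data, in fact a V-data. Since $\ga$ is abelian, $[\cdot,\Phi]$ kills $\ga$, so $e^{[\cdot,\Phi]}|_\ga=\mathrm{id}$ and therefore $P_\Phi|_\ga=P|_\ga=\mathrm{id}$, making $P_\Phi$ a projection onto $\ga$; its kernel is $e^{[\cdot,-\Phi]}(Ker\,P)$, a graded Lie subalgebra of $L$ because $e^{[\cdot,-\Phi]}$ is an automorphism of the graded Lie algebra $L$ (the exponential of an inner derivation) while $Ker\,P$ is a Lie subalgebra by hypothesis. Moreover $\Phi\in MC(\ga^P_\Delta)$ together with Rem.\ \ref{MCexp} gives $P_\Phi\Delta=0$, i.e.\ $\Delta\in Ker(P_\Phi)$. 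Hence Thm.\ \ref{voronovderived} produces the $\li[1]$-algebra $\ga^{P_\Phi}_\Delta$ (curved, with vanishing $0$-ary bracket $P_\Phi\Delta=0$), and, in the situation of item 3) where one also has $\Delta\in Ker(P)$, Thm.\ \ref{voronov} produces the $\li[1]$-algebra $(L[1]\oplus\ga)^{P_\Phi}_\Delta$; this is the algebraic content of items 2) and 3).

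For the filtrations I would first check that $\cF^n\ga:=\cF^nL\cap\ga$ is a complete filtration on $\ga$: it is Hausdorff because $\bigcap_n\cF^n\ga\subset\bigcap_n\cF^nL=0$, and any compatible family in the inverse limit of the $\ga/\cF^n\ga$, lifted along the injections $\ga/\cF^n\ga\hookrightarrow L/\cF^nL$, produces an element $x\in L=\underset{\leftarrow}{\lim}\,L/\cF^nL$ with $Px=x$ (using $P|_\ga=\mathrm{id}$ and c)), so $x\in\ga$; consequently $(\cF^nL)[1]\oplus\cF^n\ga$ is a complete filtration on $L[1]\oplus\ga$. Compatibility of the multibrackets \eqref{eq:adp} and \eqref{diffV2}--\eqref{vorderlong} with these filtrations is then read off the formulas, using that the Lie bracket and $P_\Phi$ have filtration degree zero; the one subtlety is that a single factor $\Delta$ --- equivalently one application of $D=[\Delta,\cdot]$ --- is only known to lie in $\cF^{-1}L$, so it lowers the filtration degree by one.

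This last point is the crux of the convergence claims, which I expect to be the main obstacle. For $\ga^{P_\Phi}_\Delta$: a degree-zero element $a\in\ga$ lies in $\cF^1\ga$ by b), so the $n$-th term ($n\ge1$) of the series \eqref{MaCa}, namely $\tfrac1{n!}P_\Phi[\dots[[\Delta,a],a],\dots,a]$ with $n$ copies of $a$, lies in $\cF^{n-1}\ga$ --- one $\Delta$ costs one filtration level, the $n$ copies of $a\in\cF^1$ gain $n$ --- and since the $0$-ary term vanishes the series converges in the complete filtration, proving item 2). For $(L[1]\oplus\ga)^{P_\Phi}_\Delta$ and a degree-zero element $(x[1],a)$ --- where $a\in\ga_0\subset\cF^1L$ but $x\in L_1$ need not lie in $\cF^1L$ --- I would split the series according to the two summands: its $L[1]$-component receives contributions only from the differential \eqref{diffV2} and the binary bracket, hence is a finite (at most quadratic) expression in $(x[1],a)$ and raises no convergence issue, while its $\ga$-component at order $n\ge2$ is a sum of terms each obtained by applying $P_\Phi$ to an iterated bracket with at least $n-1$ of the arguments equal to $a\in\cF^1\ga$, and so lies in a filtration piece whose degree tends to infinity with $n$. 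Hence the series converges for every degree-zero $(x[1],a)$, which is item 3). The difficulty throughout is that, because $\Delta$ (and hence $D$) need not sit in $\cF^0L$, no naive filtration-degree bound on the individual multibrackets is available; one must use instead that the offending $\Delta$-factor occurs at most once per bracket while the degree-zero entries are automatically confined to $\cF^1L$.
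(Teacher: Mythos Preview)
The paper does not actually prove this proposition here; it defers to \cite[\S 1.3]{FZgeo}. So there is no in-paper proof to compare against directly. That said, your argument is correct and follows the natural route one would expect the cited proof to take: first show that $(L,\ga,P_\Phi,\Delta)$ is again a V-data (using that $e^{[\cdot,-\Phi]}$ is a graded Lie algebra automorphism carrying $\ker P$ to $\ker P_\Phi$, and that $\Phi\in MC(\ga^P_\Delta)$ forces $\Delta\in\ker P_\Phi$ via Rem.~\ref{MCexp}); then verify completeness of the induced filtration on $\ga$ by pulling an inverse-limit element back into $\ga$ via $P$; and finally count filtration degrees term by term. Your key observation --- that $\Delta$ (equivalently $D$) appears at most once per multibracket, costing a single filtration level, while condition b) places every degree-zero element of $\ga$ in $\cF^1L$ --- is exactly the mechanism that makes the Maurer--Cartan series Cauchy. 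The splitting of the series for $(L[1]\oplus\ga)^{P_\Phi}_\Delta$ into its $L[1]$-component (finite, since only the differential and the binary bracket land in $L[1]$) and its $\ga$-component (with the $n$-th term in $\cF^{n-2}\ga$) is the right way to handle the fact that $x\in L_1$ need not lie in $\cF^1L$.

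One small remark: for item 3) you invoke the additional hypothesis $\Delta\in\ker P$, but your argument actually only uses $\Delta\in\ker P_\Phi$, which you have already obtained from the Maurer--Cartan condition on $\Phi$. So the hypothesis $\Delta\in\ker P$ plays no role in your convergence proof; it is there in the statement because Thm.~\ref{voronov}, as formulated in the paper, is stated for genuine (non-curved) V-data.
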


A common way to deal with convergence issues is to  work formally (i.e. in terms of power series in a formal variable $\varepsilon$). {The following is the analogue of Prop. \ref{prop:fil}   in the formal setting:}
  
\begin{lem}\label{lem:eps} Let $(L,\ga, P,\Delta)$ be a curved  V-data {(not necessarily filtered)}.
Let $\Phi\in\ga_0 \otimes \varepsilon\cdot \RR[[\varepsilon]]$.

\item[1)]  for the Maurer-Cartan equation of the curved $\li[1]$-algebra   $(\ga\otimes \RR[[\varepsilon]])_{\Delta}^{P_{\Phi}}$  
%is filtered by $\cF^n(\ga\otimes \RR[[\varepsilon]]):=\ga\otimes \varepsilon^n\RR[[\varepsilon]]$. Further, 
the following holds:
the sum on the left hand side of eq. \eqref{MaCa}   converges for any element $a$ of $\ga_0 \otimes \varepsilon\cdot \RR[[\varepsilon]]$.

\item[2)] if $\Delta \in ker(P)$, for the Maurer-Cartan equation of the   $\li[1]$-algebra $\big((L[1]\oplus \mathfrak{a})\otimes  \RR[[\varepsilon]]\big)_{\Delta}^{P_{\Phi}}  $ 
%  is filtered by $\cF^n(L[1]\oplus \mathfrak{a}):=(\cF^nL)[1]\oplus \cF^n\ga$. Further, 
the following holds: 
the sum on the left hand side of eq. \eqref{MaCa}  converges for any element $(x[1],a)\in(L[1]\oplus \ga)_0\otimes \varepsilon\cdot \RR[[\varepsilon]]$.
\end{lem}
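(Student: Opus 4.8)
The plan is to reduce the formal statement to a counting argument about the $\varepsilon$-adic filtration, mimicking the structure of the proof of Prop. \ref{prop:fil} but replacing the (possibly absent) complete filtration on $L$ with the canonical one on $L\otimes\RR[[\varepsilon]]$ coming from powers of $\varepsilon$. Concretely, I would equip $L\otimes\RR[[\varepsilon]]$ with the descending filtration $\cF^nL:=L\otimes\varepsilon^n\cdot\RR[[\varepsilon]]$ for $n\ge 0$, and $\cF^{-1}L:=L\otimes\RR[[\varepsilon]]$. This filtration is complete in the sense of the definition above, the Lie bracket (extended $\RR[[\varepsilon]]$-bilinearly) has filtration degree zero — in fact it even satisfies $[\cF^iL,\cF^jL]\subset\cF^{i+j}L$ trivially since $\varepsilon^i\cdot\varepsilon^j=\varepsilon^{i+j}$ — and the projection $P$ (extended $\RR[[\varepsilon]]$-linearly) preserves it. Thus conditions a) and c) of Def. \ref{triple} hold automatically, for free, without any hypothesis on the original $L$.

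The one condition of Def. \ref{triple} that does not hold for this filtration is b): we do not have $\ga_0\otimes\RR[[\varepsilon]]\subset\cF^1$. This is precisely why the statement is phrased in terms of $\Phi\in\ga_0\otimes\varepsilon\cdot\RR[[\varepsilon]]$ and in terms of elements $a$ (resp. $(x[1],a)$) lying in $\ga_0\otimes\varepsilon\cdot\RR[[\varepsilon]]$ (resp. in $(L[1]\oplus\ga)_0\otimes\varepsilon\cdot\RR[[\varepsilon]]$): these elements already lie in $\cF^1$ by fiat, so they qualify as Maurer-Cartan candidates in the sense of the filtered theory. So the plan is: first, observe that $\Phi\in\cF^1L$ implies, by the same computation as in Prop. \ref{prop:fil}(1), that $P_\Phi:=P\circ e^{[\cdot,\Phi]}$ is well-defined (each term $[\cdot,\Phi]^k$ raises $\varepsilon$-degree by $k$, so the series converges $\varepsilon$-adically and has filtration degree zero). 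Second, note that the multibrackets of $(\ga\otimes\RR[[\varepsilon]])_\Delta^{P_\Phi}$ given by Thm. \ref{voronovderived}, and those of $\big((L[1]\oplus\ga)\otimes\RR[[\varepsilon]]\big)_\Delta^{P_\Phi}$ given by Thm. \ref{voronov}, all have filtration degree zero with respect to $\cF^\bullet$, because they are built from the Lie bracket, from $D=[\Delta,\cdot]$, and from $P_\Phi$, each of which has filtration degree zero. Third, feed an argument $a$ (or $(x[1],a)$) in $\cF^1$ into the Maurer-Cartan sum $\sum_{n\ge 1}\frac{1}{n!}\{a,\dots,a\}$: the $n$-th term lies in $\cF^nL$ because it is an $n$-fold bracket expression in $a\in\cF^1$, and since the filtration is complete the sum converges. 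In case 2), the additional input is that $\Delta\in\ker P$ so that Thm. \ref{voronov} genuinely produces an (uncurved) $\li[1]$-algebra, and the curved term plays no role.

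The only genuinely non-routine point — and the part worth spelling out — is the bookkeeping that every multibracket expression appearing in the Maurer-Cartan sum, evaluated on arguments of $\varepsilon$-degree $\ge 1$, really does land in $\cF^n=\;L\otimes\varepsilon^n\RR[[\varepsilon]]$ with $n$ growing: one must check that $P_\Phi$ does not lower $\varepsilon$-degree (clear, since $P$ is $\RR[[\varepsilon]]$-linear and $e^{[\cdot,\Phi]}$ raises it) and that the differential \eqref{diffV2}, in particular the piece $P(x+Da)$, does not either (again clear). Once this is in place, completeness of the $\varepsilon$-adic filtration is exactly the statement that Cauchy sequences (here, partial sums whose increments lie in ever-deeper $\cF^n$) converge, which is the definition, so nothing further is needed. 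In short: the lemma is Prop. \ref{prop:fil} applied verbatim to the tautologically filtered V-data $(L\otimes\RR[[\varepsilon]],\ga\otimes\RR[[\varepsilon]],P,\Delta)$, with the caveat that one restricts attention to arguments already sitting in $\varepsilon\cdot\RR[[\varepsilon]]$, which is how condition b) of Def. \ref{triple} gets repaired; I would present the proof as that reduction, checking a), c), and the analogue of b) explicitly and then citing Prop. \ref{prop:fil}.
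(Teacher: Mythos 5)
Your proposal is correct and follows essentially the same route as the paper: the paper likewise introduces the curved V-data $(L\otimes\RR[[\varepsilon]],\ \ga\otimes\RR[[\varepsilon]],\ P,\ \Delta)$ with the $\varepsilon$-adic filtration $\cF^n:=L\otimes\varepsilon^n\RR[[\varepsilon]]$, observes that conditions a) and c) of Def.~\ref{triple} hold while b) fails, and notes that the proof of Prop.~\ref{prop:fil} nevertheless goes through once $\Phi$ and $a$ are restricted to $\ga_0\otimes\varepsilon\cdot\RR[[\varepsilon]]$. Your extra bookkeeping about filtration degrees of the multibrackets is exactly the content of that reduction, so nothing is missing.
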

%\begin{lem}\label{lem:eps} Let $(L,\ga, P,\Delta)$ be a curved  V-data.
%Then the conclusions of Prop. \ref{prop:fil} hold in the formal setting
%(i.e. tensoring with  $\RR[[\varepsilon]]$),
%provided one chooses the elements $\Phi$ and $a$ to lie in  $\ga_0 \otimes \varepsilon\cdot \RR[[\varepsilon]]$. 
%\end{lem}
\begin{proof}
One checks easily that the  following  is   a curved V-data:
\begin{itemize}
\item the graded Lie algebra $L\otimes \RR[[\varepsilon]]$
\item its abelian subalgebra $\ga \otimes \RR[[\varepsilon]]$
\item the natural projection $P  \colon L\otimes \RR[[\varepsilon]] \to \ga\otimes \RR[[\varepsilon]]$ 
\item $\Delta$,
\end{itemize}  
where the the first three structures   are defined
 by $\RR[[\varepsilon]]$-linear extension.
The natural complete filtration $\{\cF^n\}_{n\ge 0}$ on the vector space $L\otimes  \RR[[\varepsilon]]$ by $\cF^n:=L\otimes \varepsilon^n\RR[[\varepsilon]]$
 satisfies conditions a),  c)  of Def. \ref{triple}. It does not satisfy condition b), however the proof of
 Prop. \ref{prop:fil}, applied to the above  curved V-data, goes through whenever
 $\Phi$ and $a$  lie in  $\ga_0 \otimes \varepsilon\cdot \RR[[\varepsilon]]$.
\end{proof}

   \begin{rem}
  Notice that the curved $\li[1]$-algebra $(\ga\otimes \RR[[\varepsilon]])_{\Delta}^{P}$
  %induced by the curved V-data in the proof of Lemma \ref{lem:eps} 
  is canonically isomorphic to $(\ga_{\Delta}^P)\otimes \RR[[\varepsilon]]$.
\end{rem} 

\subsection{Equivalences of  Maurer-Cartan elements}\label{sec:sym}
 
Let $W$ be an $\li[1]$-algebra. On  $MC(W)$, the set of Maurer-Cartan elements, there is a canonical  
involutive (singular) distribution $\cD$   which induces an equivalence relation on $MC(W)$ known as \emph{gauge equivalence}. 
More precisely, each $z\in W_{-1}$ defines a vector field $\cY^z$ on $W_0$, whose value at $m\in W_0$ is\footnote{The infinite sum   \eqref{gaugeaction} is guaranteed to converge if $W$ is  filtered and $W_{-1}\subset \cF^{1}W$, see \S \ref{sec:conv}. In the example we consider in this paper, this sum is actually finite, see eq. \eqref{gaugezerodefLA}.}
  
%(see for instance \cite[eq. (10)]{BorisGauge})
\begin{equation}\label{gaugeaction}
\mathcal{Y}^z|_m:=dz+\{z,m\}+\frac{1}{2!}\{z,m,m\}+\frac{1}{3!}\{z,m,m,m\}+\dots.
\end{equation}
This vector field is tangent to $MC(W)$. The distribution at the point $m\in MC(W)$ is defined as
$\cD|_m=\{\mathcal{Y}^z|_m: z\in W_{-1}\}$.
%, which defines an equivalence relation on $MC(W)$ known as \emph{gauge equivalence}.

\begin{rem}\label{rem:d0}
When the differential $d$ vanishes, the Jacobiator of the binary bracket $\{\cdot,\cdot\}$ is zero. Hence $\{\cdot,\cdot\}$
makes the vector space $W_{-1}$ into an ordinary Lie algebra, and  
 the assignment 
 $ W_{-1} \to \chi_0(W_0), z \mapsto (\mathcal{Y}^z)_{lin}:=\{z,\cdot\}\in \chi_0(W_0)$ to the linear part   of  $\mathcal{Y}^z$ is a Lie algebra morphism.
\end{rem}

Consider in particular the $\li[1]$-subalgebra $ker(P)[1]\oplus \ga$ of the $\li[1]$-algebra   of Cor. \ref{pairMC}.   Notice that  the differential vanishes, so Remark \ref{rem:d0} applies.
% This implies that  $(ker(P)[1]\oplus \ga)_{-1}$ (upon a degree shift) corresponds to an honest Lie algebra. Further it implies that the assigment 
% $z\in(ker(P)[1]\oplus \ga)_{-1}\mapsto (\mathcal{Y}^z)_{lin}$ to the linear part $\{z,\cdot\}$ of the $\mathcal{Y}^z$ is a Lie algebra morphism.
The vector field associated to a degree $-1$ element $z=(z_L[1],z_\ga)\in ker(P)[1]\oplus \ga$, eval{ua}ted at $m=(m_L[1],m_\ga)\in MC(ker(P)[1]\oplus \ga)$, reads
\begin{align}\label{gaugezerodef}
\mathcal{Y}^z|_m=[z_L,m_L][1]+
%+P[z_L,m_\ga]+ P[z_\ga,m_L]+
\sum_{n\ge 1}\frac{1}{n!}P[[z_L,\underbrace{m_\ga],\dots,m_\ga}_{n \text{ times}}]+
\sum_{n\ge 1}\frac{1}{(n-1)!}P[[[m_L,z_\ga],\underbrace{m_\ga],\dots,m_\ga}_{n-1 \text{ times}}]
\end{align}
where  the square bracket is the graded Lie algebra structure on $L$.\\

We will display explicitly the equivalence relation  induced on {morphisms between Lie algebras in   \S \ref{sec:lamorf}.   {It  turns out that the equivalence classes coincide with the orbits of a 
group action.}

 %%%%%%%%%%%

\section{Applications to Lie theory}\label{Lie}

We apply now the machinery developed above to instances in Lie theory. For the examples we treat here, procedures to recover the $\li[1]$-algebras governing  simultaneous deformations
are known \cite{FMY}, but often are not exhibited in explicit form in the literature. Using our machinery, we make the $\li[1]$-algebras structures quite explicit. The results of \S \ref{sec:lamorf} recover a theorem in  \cite{FMY}.
We mention further that the results we obtained in \S \ref{sec:subalg} have been recently extended by Ji from the setting of Lie algebras to that of Lie algebroids \cite{Ji}.

%We refer the reader to Appendix \ref{grgeo} for the background material
 %needed in \S \ref{sec:lamorf}- \ref{sec:labimorf}., and to Appendix
%\ref{appendixcoder} for that needed  in \S \ref{sec:MCli} - \ref{HomotLiemorph}.

\subsection{Lie algebra morphisms.}\label{sec:lamorf}

Let $(U,[\cdot,\cdot]_U)$ and $(V,[\cdot,\cdot]_V)$ be  finite dimensional Lie algebras. We show that
the   deformations of Lie algebra morphisms $U \to V$ are ruled by a DGLA,
recovering classical results of Nijenhuis and Richardson \cite{NijRic2}, 
and that more generally  the simultaneous deformations of the Lie algebra structures and Lie algebra morphisms  are  ruled by a   $L_{\infty}$-algebra, {recovering  a theorem in  \cite{FMY}}  {by the first author, Markl and Yau}. The set-up of this subsection is a  special case of the one of \S \ref{HomotLiemorph}. We consider the simple instance of Lie algebras separately for the sake of concreteness and clarity of exposition.  {Further, we discuss equivalences}.

We consider the graded manifold $(U\times V)[1]$, and encode the above data as vector fields on this graded manifold. See  \cite[\S 1.4]{FloDiss} or \cite{AlbFlRio} for some basic notions on graded manifolds and the notation; in particular $\chi(U[1])$ denotes the space of vector fields on $U[1]$, and   $\iota \colon U \to \chi_{-1}(U[1])$ identifies elements of $U$ with constant vector fields.  We adopt the following conventions:
\begin{itemize}
\item   The Lie bracket $[\cdot,\cdot]_U$ is encoded by the  vector field $Q_U\in \chi_1(U[1])$ defined by $[[Q_U,\iota_X],\iota_Y]=\iota_{[X,Y]_U}$ for all $X,Y\in U$. {The Jacobi identity for $[\cdot,\cdot]_U$ is equivalent to this vector field being homological (i.e., $[Q_U,Q_U]=0$)}
\item A linear map  $\phi \colon U \to V$ is encoded by $\Phi\in \chi_0((U\times V)[1])$ defined by  
$[\Phi,\iota_{X}]=\iota_{\phi(X)}$ for all $X\in U$. 
\end{itemize} 
  \begin{rem}
We give coordinate expressions for the vector fields $Q_U,Q_V,\Phi$. Choose a basis of $U$, giving rise to coordinates $\{u_i\}$ on $U[1]$, and similarly choosing a basis of $V$ get coordinates $\{v_{\alpha}\}$ on $V[1]$. Then 
\begin{equation}\label{coords}
Q_U=-\frac{1}{2}c_{ij}^k u_i u_j \pd{u_k},\;\;\;\;\;\;Q_V=-\frac{1}{2}d_{\alpha \beta}^{\gamma} v_{\alpha} v_{\beta} \pd{v_{\gamma}},\;\;\;\;\;\;\Phi=-A_{l \eta} u_l \pd{v_{\eta}}
\end{equation}
where $c_{ij}^k $ and $d_{\alpha \beta}^{\gamma}$ are the structural constants of the Lie algebras $U$ and $V$ respectively  and $A_{l \eta}$ is the matrix representing $\phi$ in the chosen basis.
\end{rem}

The map $\phi \colon U \to V$ is a Lie algebra morphism exactly when
\begin{equation}\label{MCQ}
[Q_U,\Phi]+\frac{1}{2}[[Q_V,\Phi],\Phi]=0,
\end{equation}
see for example \cite[p. 176]{NijRic2}.
%Let $Q_{\phi}\in C_1(U[1])\otimes \chi_{-1}(V[1])$. Plugging $ Q_{\phi}+Q_{\phi}$ in place  of
%$Q_{\phi}$ in   \eqref{MCQ},  we see that $ Q_{\phi}+Q_{\phi}$ corresponds
%to a Lie algebra morphisms $U \to V$ if{f}
%\begin{equation}
%[Q_U+[Q_V,Q_{\phi}] ,Q_{\phi}]=\frac{1}{2}[[Q_V,Q_{\phi}],Q_{\phi}].
%\end{equation}
%This gives a hint about how to recover the Richardson-Nijenhius DGLA $\wedge U^*\otimes V$ using Voronov's derived bracket construction:

\begin{lem}\label{keyLA} The following quadruple forms a V-data: 
\begin{itemize}
\item the graded Lie algebra $L:=\chi((U\times V)[1])$
\item its abelian subalgebra $\ga:=C(U[1])\otimes V[1]$
\item the natural projection $P \colon L \to \ga$ with kernel 
\begin{equation*} 
ker(P)=\Big(C(U[1])\otimes C_{\ge 1}(V[1])\otimes V[1]\Big)\; \oplus \;\Big(C(U[1] \times V[1])\otimes U[1]\Big)
\end{equation*}
\item $\Delta:=Q_U +Q_V$,
\end{itemize}
hence by Thm. \ref{voronovderived} we obtain a  $L_{\infty}[1]$-structure  $\ga^P_{\Delta}$.
For every linear map $\phi \colon U \to V$ we have:
$\Phi\in \ga_0$ is a Maurer-Cartan element in  $\ga^P_{\Delta}$ if{f} $\phi$ is a Lie algebra morphism.
\end{lem}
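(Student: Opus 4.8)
The plan is to verify the four bullet points of the V-data definition (Def.~\ref{vdata}) for the given quadruple, and then to identify the Maurer-Cartan condition of $\ga^P_\Delta$ with the morphism equation \eqref{MCQ}.

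\textbf{Checking the V-data axioms.} First I would recall the standard identification of $\chi((U\times V)[1])$ as a graded Lie algebra under the commutator bracket, and note that the space $\ga:=C(U[1])\otimes V[1]$ of $V$-linear vector fields with constant (i.e., coefficient-in-$C(U[1])$) coefficients pointing in the $V[1]$-directions forms an abelian subalgebra: the bracket of two such vector fields vanishes because differentiating a constant-in-$v$ coefficient by $\partial/\partial v$ gives zero, and there are no $\partial/\partial u$ components to produce cross terms. Next I would verify that $P$ is a well-defined projection onto $\ga$ whose kernel, as displayed, is a Lie subalgebra: the kernel decomposes into the vector fields along $V[1]$ whose coefficients have positive polynomial degree in the $v$-variables, plus all vector fields along $U[1]$; one checks this decomposition is closed under the commutator by a direct degree/direction bookkeeping (a bracket either stays along $U[1]$, or lands along $V[1]$ but with $v$-degree $\ge 1$ preserved, or is killed). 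Finally, $\Delta:=Q_U+Q_V$ lies in $\mathrm{Ker}(P)_1$ since $Q_U$ is a vector field along $U[1]$ and $Q_V$ is along $V[1]$ with quadratic (hence $v$-degree $\ge 1$, in fact $=2$) coefficients; and $[\Delta,\Delta]=[Q_U,Q_U]+2[Q_U,Q_V]+[Q_V,Q_V]=0$ because each $Q$ is homological (Jacobi identity for $U$ and $V$) and the mixed bracket $[Q_U,Q_V]$ vanishes as $Q_U,Q_V$ act on disjoint sets of coordinates. This gives the $L_\infty[1]$-structure $\ga^P_\Delta$ via Thm.~\ref{voronovderived}.

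\textbf{Identifying the Maurer-Cartan condition.} By Remark~\ref{MCexp}, $\Phi\in\ga_0$ is a Maurer-Cartan element of $\ga^P_\Delta$ iff $P[\Delta,\Phi]+\tfrac12 P[[\Delta,\Phi],\Phi]+\cdots=0$; here the key computational point is that the derived-bracket series truncates. Since $\Phi=-A_{l\eta}u_l\,\partial/\partial v_\eta$ has $u$-linear, $v$-constant coefficients, $[Q_V,\Phi]$ is still $v$-linear, $[[Q_V,\Phi],\Phi]$ is $v$-independent, and any further bracket with $\Phi$ vanishes; similarly $[Q_U,\Phi]$ is a vector field along $V[1]$ and $[[Q_U,\Phi],\Phi]=0$ because $\Phi$ has no $\partial/\partial v$ components to act on it. Hence $\{\Phi\}+\tfrac12\{\Phi,\Phi\}$ already captures the whole sum, and applying $P$ to $[\Delta,\Phi]+\tfrac12[[\Delta,\Phi],\Phi]=[Q_U,\Phi]+[Q_V,\Phi]+\tfrac12[[Q_V,\Phi],\Phi]$, one notes $[Q_V,\Phi]$ is already killed by $P$ (it is $v$-linear, hence in $\mathrm{Ker}(P)$), while the remaining terms $[Q_U,\Phi]+\tfrac12[[Q_V,\Phi],\Phi]$ lie in $\ga$. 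So the Maurer-Cartan equation reads precisely $[Q_U,\Phi]+\tfrac12[[Q_V,\Phi],\Phi]=0$, which is \eqref{MCQ}, i.e., $\phi$ is a Lie algebra morphism.

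\textbf{Main obstacle.} The genuinely V-data-specific verification is that $\mathrm{Ker}(P)$ is a Lie subalgebra; the rest is bookkeeping. The cleanest way to handle it is to set up a bigrading on $\chi((U\times V)[1])$ by (polynomial degree in the $v$-variables, direction $\in\{U[1],V[1]\}$) and check the bracket respects the relevant filtration, so that the two summands in the displayed kernel are each (or together) closed. I would also make explicit, perhaps in coordinates using \eqref{coords}, that the derived-bracket series for $\Phi$ terminates after the quadratic term, since this finiteness is what makes the identification with \eqref{MCQ} rigorous and sidesteps any convergence discussion at this point.
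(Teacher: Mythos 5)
Your proposal is correct and follows essentially the same route as the paper: verify the V-data axioms (with the kernel closure checked by coordinate/degree bookkeeping, which the paper also does, alternatively observing that $\mathrm{Ker}(P)$ consists of the vector fields tangent to $(U\times\{0\})[1]$), then compute in coordinates that the derived-bracket series truncates so that only $[Q_U,\Phi]$ and $\tfrac12[[Q_V,\Phi],\Phi]$ survive the projection, recovering eq.~\eqref{MCQ}. The only nitpick is the phrase ``$\Phi$ has no $\partial/\partial v$ components to act on it'' in the argument that $[[Q_U,\Phi],\Phi]=0$, which should read $\partial/\partial u$ (equivalently, this vanishing is just the abelianness of $\ga$, since $[Q_U,\Phi]\in\ga$).
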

\begin{proof} $Ker(P)$ is a Lie subalgebra of $L$. This can be seen in coordinates, or noticing that the kernel  consists exactly of vector fields on $(U\times V)[1]$ which are tangent to $(U\times \{0\})[1]$.
{Further we have $[\Delta,\Delta]= [Q_U,Q_U]+[Q_V,Q_V]=0$.} Hence the above quadruple forms a V-data. 

The $L_{\infty}[1]$-structure  induced on $\ga$ by  Thm. \ref{voronovderived} is given by the multibrackets
$P [[[Q_U +Q_V,\cdot],\cdots],\cdot]$.
One computes easily in coordinates using \eqref{coords} that $P[Q_V,\cdot]$, $ [[Q_U,\cdot],\cdot]$ and $
[[[Q_V,\cdot],\cdot],\cdot]$ vanish when applied to elements of $\ga$. Hence  only the unary and binary brackets are non-zero, and they are given by 
\begin{align*}
%\label{RNQ}
%&[Q_U + [Q_V,Q_{\phi}],\cdot]\\
&[Q_U,\cdot]\\
&[[Q_V,\cdot],\cdot]\end{align*}
respectively. Therefore the Maurer-Cartan equation of $\ga^P_{\Delta}$ is given by \eqref{MCQ}. 
\end{proof}

Lemma \ref{keyLA} allows us to apply  Thm. \ref{machine} (and Rem. \ref{rem:1)}). Hence we deduce:
\begin{cly}\label{cor:LA}
Let $U,V$ finite dimensional Lie algebras and  $\phi  \colon U\to V$ a  morphism. Let $(L,\ga,P,\Delta)$ as in Lemma \ref{keyLA}.
\begin{itemize}\item[1)] Let   $\tilde{\phi}\colon U \to V$ be a linear map. Then
$$\phi +\tilde{\phi} \text{ is a Lie algebra morphism }
\;\;\;\Leftrightarrow \;\;\;  {\tilde{\Phi}} \text{ is a MC element of }\ga_{\Delta}^{P_{\Phi}}.$$

\item[2)] 
% Consider the
%$L_{\infty}$-algebra structure on $L[1]\oplus \ga$ induced by $D:=[\Delta,\cdot]$ %and $P_{\phi}$ as in Thm. \ref{voronov}.
For all  quadratic vector fields $\tilde{Q}_U$ on $U[1]$ and  $\tilde{Q}_V$ on $V[1]$
%(denoting $\tilde{\Delta}:=\tilde{Q}_U +\tilde{Q}_V $) 
and for all linear maps
 $\tilde{\phi}\colon U \to V$: 
 \begin{align*}
  &\begin{cases}
Q_U+\tilde{Q}_U \text{ and } Q_V+\tilde{Q}_V \text{define Lie algebra structures on } U \text{ and }V\\
\phi+ \tilde{\phi} \text{ is a Lie algebra morphism between these new Lie algebra structures}\end{cases}
\\
 \Leftrightarrow
&((\tilde{Q}_U +\tilde{Q}_V)[1],\tilde{\Phi}) \text{ is a MC element of } (L[1]\oplus \ga)_{\Delta}^{P_{\Phi}}.
\end{align*}
\end{itemize}
\end{cly}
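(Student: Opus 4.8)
The plan is to deduce Corollary \ref{cor:LA} directly from the general machinery, specifically Theorem \ref{machine} and Remark \ref{rem:1)}, using the V-data established in Lemma \ref{keyLA}. First I would note that Lemma \ref{keyLA} provides a V-data $(L,\ga,P,\Delta)$ with $\Delta = Q_U + Q_V$, and that $\Phi\in\ga_0$ (encoding the given morphism $\phi$) is a Maurer-Cartan element of $\ga_\Delta^P$ precisely because $\phi$ is a Lie algebra morphism; this is exactly the hypothesis $\Phi\in MC(\ga_\Delta^P)$ needed to invoke Theorem \ref{machine}. One should also observe that this V-data is filtered in the sense of Definition \ref{triple}, so that the theorem applies and no convergence issues arise — here the relevant filtration is the one coming from polynomial degree on the graded manifold $(U\times V)[1]$, under which the bracket, the projection $P$, and the inclusion $\ga_0\subset\cF^1L$ all behave as required.

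For part 1), I would simply specialize Theorem \ref{machine} to $\tilde\Delta = 0$, which is exactly the content of Remark \ref{rem:1)}: since $\tilde\Delta=0$ the condition $[\Delta+\tilde\Delta,\Delta+\tilde\Delta]=0$ is automatic, and the equivalence reduces to ``$\Phi+\tilde\Phi\in MC(\ga_\Delta^P)$ iff $\tilde\Phi\in MC(\ga_\Delta^{P_\Phi})$.'' By Lemma \ref{keyLA} the left-hand side says precisely that $\phi+\tilde\phi$ is a Lie algebra morphism (for the unchanged brackets), giving the claimed statement. Here $\tilde\Phi$ denotes the vector field encoding $\tilde\phi$, exactly as $\Phi$ encodes $\phi$.

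For part 2), I would write $\tilde\Delta := \tilde Q_U + \tilde Q_V$, a degree-$1$ element of $L$ since quadratic vector fields on $U[1]$ (resp. $V[1]$) have degree $1$. Then I would unpack the two conditions on the left-hand side of Theorem \ref{machine}: the equation $[\Delta+\tilde\Delta,\Delta+\tilde\Delta]=0$ decomposes, using $[\chi(U[1]),\chi(V[1])]=0$, into $[Q_U+\tilde Q_U, Q_U+\tilde Q_U]=0$ and $[Q_V+\tilde Q_V, Q_V+\tilde Q_V]=0$, which are exactly the Jacobi identities for the deformed brackets on $U$ and $V$ respectively (by the encoding convention for $Q_U$ recalled before the corollary). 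Next, $\Phi+\tilde\Phi\in MC(\ga_{\Delta+\tilde\Delta}^P)$ is, by the same argument as in Lemma \ref{keyLA} applied with $\Delta$ replaced by $\Delta+\tilde\Delta$, precisely the statement that $\phi+\tilde\phi$ intertwines the deformed brackets, i.e. is a Lie algebra morphism between the new structures. Theorem \ref{machine} then identifies this conjunction with $(\tilde\Delta[1],\tilde\Phi)\in MC((L[1]\oplus\ga)_\Delta^{P_\Phi})$, which is the right-hand side of the claimed equivalence once we substitute $\tilde\Delta = \tilde Q_U + \tilde Q_V$.

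The one point that genuinely requires care — and which I would expect to be the main obstacle — is verifying that the ``encoding'' dictionary behaves well under deformation: namely that $\Phi+\tilde\Phi\in MC(\ga_{\Delta+\tilde\Delta}^P)$ is equivalent to $\phi+\tilde\phi$ being a morphism for the \emph{deformed} bracket, not just the original one. The proof of Lemma \ref{keyLA} showed that with $\Delta=Q_U+Q_V$ the only surviving multibrackets on $\ga$ are $[Q_U,\cdot]$ and $[[Q_V,\cdot],\cdot]$, so that the Maurer-Cartan equation is \eqref{MCQ}; one must check that replacing $Q_U\rightsquigarrow Q_U+\tilde Q_U$ and $Q_V\rightsquigarrow Q_V+\tilde Q_V$ does not activate any new multibrackets — and indeed it does not, since the vanishing of $P[Q_V,\cdot]$, $[[Q_U,\cdot],\cdot]$ and $[[[Q_V,\cdot],\cdot],\cdot]$ on $\ga$ is a statement about the \emph{shape} of quadratic vector fields (it holds for any quadratic $Q_U$, $Q_V$, as the coordinate computation in \eqref{coords} makes clear), hence survives the deformation. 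Granting this, the rest is a direct translation through Theorem \ref{machine}, and I would present it as such, keeping the computational content to the brief remark above.
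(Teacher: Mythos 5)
Your proposal is correct and follows exactly the paper's route: the paper deduces Corollary \ref{cor:LA} directly from Theorem \ref{machine} (with Remark \ref{rem:1)} for part 1) applied to the V-data of Lemma \ref{keyLA}, with the filtration check deferred to Remark \ref{rem:convla}. The point you single out as needing care — that the characterization of Maurer-Cartan elements as morphisms persists for the deformed $\Delta+\tilde\Delta$ because the vanishing of the extraneous multibrackets depends only on the quadratic shape of the homological vector fields — is precisely what the paper uses implicitly (and makes explicit later, in the proof of Lemma \ref{Jan2}).
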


\begin{rem}\label{rem:convla}
{We check  that $(L,\ga, P,\Delta)$ is filtered V-data (Def. \ref{triple}), as this is a hypothesis in Thm. \ref{machine}.}
We have a direct sum decomposition $L=\oplus_{k \ge -1}L^k$ where
$L^k:= 
C_{k+1}(U[1])\otimes C(V[1])\otimes U[1] \;\oplus \;  C_{k}(U[1]) \otimes C(V[1])\otimes V[1]$. In other words, $L^k$ is  spanned by monomials in $L$ whose total number of $u$'s and $\pd{v}$'s, in coordinates, is exactly $k+1$.  
Then $\cF^nL:=\oplus_{k\ge n}L^k$ is   a complete filtration of the vector space $L$. 
 One checks easily that $(L,\ga, P,\Delta)$ is filtered V-data.

{An alternative way to check that there are no convergence issues for $e^{[\cdot,\Phi]}$ and the Maurer-Cartan equations appearing in Cor. \ref{cor:LA} is to recall that $U\times V$ is finite dimensional and use a variant of Lemma \ref{Jan2} below.} 
\end{rem}

\subsubsection{{Explicit expressions for the multibrackets}} 
%In the rest of 
In this subsection we make more explicit the structures of $\ga_{\Delta}^{P_{\Phi}}$ and 
$(L'[1]\oplus \ga)_{\Delta}^{P_{\Phi}}$, where $L'\subset L$ is specified just after Lemma \ref{Jan}.

Given a morphism of Lie algebras $\phi \colon U \to V$, the associated
\emph{Richardson-Nijenhius DGLA}  is given by $\oplus_i \wedge^i U^* \otimes V$, the differential being the Chevalley-Eilenberg differential of $U$ with values in the module $V$ (the module structure is given by $e\in U\mapsto [\phi(e),\cdot]_V$) and the bracket being   the Lie bracket on $V$ combined with the wedge product on $\wedge U^*$ (see \cite[p. 175-6]{NijRic2} or \cite[\S 2.3]{yaelnew}).

 \begin{lem}\label{Jan}
 $\ga_{\Delta}^{P_{\Phi}}$ is the {suspension} of the Richardson-Nijenhius DGLA.
 % introduced in \cite[\S 1]{NijRic2}.
  %(viewed as a \li[1]-algebra).
  %, and its MC equation is exactly \eqref{MCQdef1}.
\end{lem}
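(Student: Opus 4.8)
The plan is to identify the $\li[1]$-algebra $\ga_{\Delta}^{P_{\Phi}}$ produced by Thm.~\ref{voronovderived} (applied to the projection $P_{\Phi}$ in place of $P$) with the desuspension data of the Richardson--Nijenhuis DGLA, by computing the multibrackets explicitly in coordinates. Recall from Rem.~\ref{MCexp} and Prop.~\ref{prop:fil} (the V-data being filtered, by Rem.~\ref{rem:convla}) that $P_{\Phi}=P\circ e^{[\cdot,\Phi]}$ is a well-defined projection onto $\ga=C(U[1])\otimes V[1]$, and that $\ga_{\Delta}^{P_{\Phi}}$ carries multibrackets $\{a_1,\dots,a_n\}=P_{\Phi}[\dots[[\Delta,a_1],a_2],\dots,a_n]$ with $\Delta=Q_U+Q_V$. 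Since $\ga\cong \wedge^\bullet U^*\otimes V$ as a graded vector space (a degree-$k$ element of $C(U[1])$ being a $k$-form on $U$, shifted), the claim ``$\ga_{\Delta}^{P_{\Phi}}$ is the suspension of the R--N DGLA'' amounts to showing that (i) only the unary and binary brackets are nonzero, (ii) the unary bracket is the Chevalley--Eilenberg differential $d_{\mathrm{CE}}$ of $U$ with coefficients in $V$ via $e\mapsto[\phi(e),\cdot]_V$, and (iii) the binary bracket is the wedge-cum-bracket operation on $\wedge^\bullet U^*\otimes V$, up to the décalage signs of Rem.~\ref{desuspend}.

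The key computational steps, in order, are as follows. First I would conjugate: for $a\in\ga$, $e^{[\cdot,\Phi]}a=a$ (since $[\Phi,a]$ lands in $C(U[1])\otimes C_{\ge 1}(V[1])\otimes V[1]\subset\ker P$ and iterated brackets stay there — here one uses the explicit form $\Phi=-A_{l\eta}u_l\,\partial/\partial v_\eta$), so the subtlety is only in how $\Delta=Q_U+Q_V$ gets transported. Concretely, $e^{[\cdot,\Phi]}(Q_U+Q_V)=Q_U+[Q_V,\Phi]+Q_V+\tfrac12[[Q_V,\Phi],\Phi]+\dots$; since $Q_V$ is quadratic in the $v$'s and $\Phi$ is linear in $u$ and in $\partial/\partial v$, the series terminates, and I would record the relevant surviving terms. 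Second, I would compute $\{a_1,\dots,a_n\}=P[\dots[[e^{[\cdot,\Phi]}\Delta,a_1],a_2],\dots,a_n]$ in the coordinates $\{u_i\},\{v_\alpha\}$ of \eqref{coords}, exactly as in the proof of Lemma~\ref{keyLA}: the pieces $P[Q_V,\cdot]$, $[[Q_U,\cdot],\cdot]$, $[[[Q_V,\cdot],\cdot],\cdot]$ already vanish on $\ga$; the new cross-terms coming from $[Q_V,\Phi]$ contribute one derivation of $v$, producing the module action $[\phi(\cdot),\cdot]_V$; and the $[[Q_V,\Phi],\Phi]$-term is precisely what turns the naive differential into $d_{\mathrm{CE}}$ with the twisted coefficients. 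Third, I would read off that the unary bracket is $Q_U$ acting on forms (the de Rham–type differential on $C(U[1])$, i.e. $d_{\mathrm{CE}}$ of $U$ with trivial coefficients) plus the $V$-valued correction, i.e. the full $d_{\mathrm{CE}}(\phi)$-twisted differential, while the binary bracket $P_\Phi[[\Delta,a_1],a_2]=[[Q_V,a_1],a_2]$ (the $Q_V$-part is the only one quadratic enough to survive two insertions) is the wedge product on form-degrees together with $[\cdot,\cdot]_V$ on values. Finally, I would match signs: track the décalage isomorphism \eqref{deca} relating the $\li[1]$-brackets to the would-be DGLA brackets, and check these reproduce the standard Koszul signs of $\oplus_i\wedge^iU^*\otimes V$.

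I expect the main obstacle to be neither the structural vanishing (that is routine degree-counting, identical in spirit to Lemma~\ref{keyLA}) nor the identification of the binary bracket, but rather pinning down the twisted differential and its sign with care: one must verify that after conjugation by $e^{[\cdot,\Phi]}$ the unary bracket is genuinely the Chevalley--Eilenberg differential of $U$ \emph{with coefficients in the $\phi$-module $V$} — including the term quadratic in $\Phi$ — and that the resulting signs, once the décalage of Rem.~\ref{desuspend} is undone, agree with the conventions of \cite{NijRic2}. A clean way to organize this is to observe that $Q_U+[Q_V,\Phi]+\tfrac12[[Q_V,\Phi],\Phi]$ is exactly the homological vector field on $(U\times\{0\})[1]$-tangent directions encoding the twisted CE differential once $\phi$ is a morphism (which holds since $\Phi\in MC(\ga_\Delta^P)$ by Lemma~\ref{keyLA}), so the computation is really just unwinding \eqref{MCQ}. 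With that remark in place the proof reduces to the coordinate bookkeeping sketched above.
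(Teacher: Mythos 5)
Your proposal is correct and follows essentially the same route as the paper: compute $P_{\Phi}[\dots[\Delta,a_1],\dots,a_n]$ in the coordinates \eqref{coords}, observe that only the unary and binary brackets survive, and identify them with the $\phi$-twisted Chevalley--Eilenberg differential $[Q_U+[Q_V,\Phi],\cdot]$ and the wedge-cum-bracket $[[Q_V,\cdot],\cdot]$. One small correction to your bookkeeping: the twisting of the differential comes from the single term $[Q_V,\Phi]$ alone, while $[Q_U,\Phi]$ and $\tfrac12[[Q_V,\Phi],\Phi]$ lie in the abelian subalgebra $\ga$ and therefore contribute nothing to the $n\ge 1$ brackets --- they only enter the curvature, where they cancel by \eqref{MCQ}.
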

\begin{proof}  
The $n$-ary bracket of  $\ga_{\Delta}^{P_{\Phi}}$, evaluated on $a_1,\dots,a_n\in \ga$ 
 is  
\begin{eqnarray*}
P_{\Phi}[[[Q_U + Q_V,a_1],\cdots],a_n]\end{eqnarray*}
One computes easily in coordinates that only unary and binary brackets are non-zero, and they are given by 
\begin{align}
\label{RNQ}
P[Q_U + [Q_V,{\Phi}],\cdot]=&[Q_U + [Q_V,{\Phi}],\cdot]\\
\label{RNQ2} P[[Q_V,\cdot],\cdot]=&[[Q_V,\cdot],\cdot].\end{align}
respectively. 
%This shows the statement about the MC equation. 
The r.h.s. of \eqref{RNQ} is exactly the Chevalley-Eilenberg differential of the Lie algebra $U$ with values in the module $V$. The r.h.s. of \eqref{RNQ2} is given by the Lie bracket on $V$ combined with  the wedge product on $\wedge U^*$. Hence we obtain the {suspension of the} {Nijenhuis-Richardson} DGLA.
\end{proof}

Up to this point we only looked at deformations of the morphism $\phi \colon U \to V$.
Now we also deform the Lie algebra structures on the vector spaces $U$ and $V$.

Define $L':=\chi(U[1])\oplus \chi(V[1])\subset L$. By Thm. \ref{machine} and Rem. \ref{kerPa} we obtain an  $L_{\infty}[1]$-algebra $(L'[1]\oplus \ga)_{\Delta}^{P_{\Phi}}$, governing the simultaneous deformations of the Lie algebra structures on $U,V$ and of the morphisms. 

\begin{lem}\label{Jan2}
$(L'[1]\oplus \ga)_{\Delta}^{P_{\Phi}}$ has multibrackets of order up to $dim(V)+1$. Its Maurer-Cartan equation is cubic, given by {eq. \eqref{MCLAU}, \eqref{MCLAV}} and \eqref{MCLA} below. 
%exactly given by \eqref{QUMC}, \eqref{QVMC} and \eqref{MC}.
\end{lem}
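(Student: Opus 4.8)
The plan is to compute the multibrackets of $(L'[1]\oplus \ga)_{\Delta}^{P_{\Phi}}$ directly in the coordinates $\{u_i\}$ on $U[1]$ and $\{v_\alpha\}$ on $V[1]$, using the formulas of Theorem~\ref{voronov} (together with Remark~\ref{kerPa} applied to $L'$). By Proposition~\ref{prop:fil} (with the filtration of Remark~\ref{rem:convla}), the projection $P_\Phi=P\circ e^{[\cdot,\Phi]}$ is well-defined, so the brackets make sense. The key point is that $\Phi=-A_{l\eta}u_l\,\partial/\partial v_\eta$ has ``$v$-degree'' $-1$ (it removes one $\partial/\partial v$ and adds no $u$, $v$), so $e^{[\cdot,\Phi]}$ is a \emph{finite} sum on any given monomial, and moreover $[\cdot,\Phi]$ can only be applied a number of times bounded by the number of $\partial/\partial v$'s appearing; since a vector field on $(U\times V)[1]$ of the form occurring in $L'[1]$ or in iterated brackets with elements of $\ga$ has at most $\dim(V)$ factors of the form $v_\alpha$ (the anticommuting coordinates), and $P$ only keeps terms with exactly one $\partial/\partial v$, no $v$'s and arbitrarily many $u$'s, the arity of the nonzero multibrackets is bounded by $\dim(V)+1$.

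Concretely, I would proceed as follows. First, recall from Corollary~\ref{pairMC}/Theorem~\ref{voronov} that the nonzero multibrackets of $(L'[1]\oplus\ga)_\Delta^{P_\Phi}$, up to permutation, are the differential $d(x[1],a)=(-(Dx)[1],P_\Phi(x+Da))$ with $D=[\Delta,\cdot]=[Q_U+Q_V,\cdot]$, the binary bracket $\{x[1],y[1]\}=[x,y][1](-1)^{|x|}$ on $L'[1]$, and the brackets $\{x[1],a_1,\dots,a_n\}=P_\Phi[\dots[x,a_1],\dots,a_n]$ and $\{a_1,\dots,a_n\}=P_\Phi[\dots[Da_1,a_2],\dots,a_n]$. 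Second, I would examine the $v$-degree bookkeeping: write each monomial vector field by its number $p$ of $v_\alpha$-factors, $q$ of $\partial/\partial v$-factors, and $r$ of $u_i$-factors, noting $p\le\dim V$ and $q\le 1$ after applying $P$; track how $[Q_U,\cdot]$, $[Q_V,\cdot]$, $[\Phi,\cdot]$, and bracketing with $a\in\ga$ (which has $p=0$, $q=1$) change $(p,q,r)$. One sees that to land in $\ga$ after $P_\Phi$, only finitely many terms survive, and the arity is capped at $\dim(V)+1$ because each further $a$ in the bracket must contract against a $v_\alpha$ created earlier, and there are at most $\dim V$ of those, plus the initial $Q_V$ or $x$. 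Third, I would write out the surviving brackets explicitly: this gives the cubic Maurer-Cartan equation, which by Theorem~\ref{machine}/Corollary~\ref{cor:LA} must reproduce: the Jacobi identity for $Q_U+\tilde Q_U$ (eq.~\eqref{MCLAU}), for $Q_V+\tilde Q_V$ (eq.~\eqref{MCLAV}), and the morphism condition $[Q_U+\tilde Q_U,\Phi+\tilde\Phi]+\tfrac12[[Q_V+\tilde Q_V,\Phi+\tilde\Phi],\Phi+\tilde\Phi]=0$ (eq.~\eqref{MCLA}), which is manifestly cubic in the unknowns $(\tilde Q_U,\tilde Q_V,\tilde\phi)$.

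For the claim that the Maurer-Cartan equation is cubic, the cleanest route is not to expand the $\li[1]$-brackets at all but to invoke Theorem~\ref{machine}: $((\tilde Q_U+\tilde Q_V)[1],\tilde\Phi)\in MC((L[1]\oplus\ga)_\Delta^{P_\Phi})$ is equivalent to $[\Delta+\tilde\Delta,\Delta+\tilde\Delta]=0$ together with $\Phi+\tilde\Phi\in MC(\ga^P_{\Delta+\tilde\Delta})$, i.e. to $[Q_U+\tilde Q_U,Q_U+\tilde Q_U]=0$, $[Q_V+\tilde Q_V,Q_V+\tilde Q_V]=0$, and eq.~\eqref{MCQ} with deformed data — and all three are polynomial of degree $\le 3$. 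Then the only genuine work is the arity bound $\dim(V)+1$, for which the $v$-degree counting argument above is the heart; I expect that bookkeeping — carefully showing that $P_\Phi$ applied to an iterated bracket of $n$ elements of $\ga$ with one element of $L'[1]$ vanishes once $n>\dim V$ — to be the main obstacle, since it requires tracking the combinatorics of how $e^{[\cdot,\Phi]}$ redistributes $u$'s and $v$'s while $P$ enforces the rigid shape (exactly one $\partial/\partial v$, no $v$'s, arbitrarily many $u$'s) of elements of $\ga=C(U[1])\otimes V[1]$.
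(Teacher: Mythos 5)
Your proposal is correct and follows essentially the same route as the paper: the paper likewise writes out the multibrackets from Theorem~\ref{voronov}, obtains the arity bound by observing that $[\cdot,\tilde\Phi]$ lowers the polynomial degree in the $v$'s by one while $\tilde Q_V\in\sum_{i=1}^{\dim V}C_i(V[1])\otimes V[1]$, and derives the cubic Maurer--Cartan equations \eqref{MCLAU}, \eqref{MCLAV}, \eqref{MCLA} by invoking Corollary~\ref{cor:LA} rather than expanding the brackets. Your $v$-degree bookkeeping is exactly the paper's key counting argument, so no gap remains.
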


\begin{proof}
%By the formulae  \eqref{vorder} for the multibrackets on $L'[1]\oplus \ga$ it is clear that $L'\oplus \ga$ is closed with respect to all multibrackets.

%The Maurer-Cartan equation of $L'[1]\oplus \ga$ is
%\begin{equation}\label{MaCa}
% \sum_{n=1}^{\infty} \frac{1}{n!}\{Q,\dots,Q\}=0
%\end{equation}
%where $Q$ is a degree zero element.
%An  element  of $L'[1]\oplus \ga$ is a sum 
We write down explicitly the multibrackets of $(L'[1]\oplus \ga)_{\Delta}^{P_{\Phi}}$, as given in Thm. \ref{voronov}.
We denote by $\tilde{Q}_U^i,\tilde{Q}_V^i$ and $\tilde{\Phi}^i$ general (homogeneous) elements of 
 $\chi(U[1]),\chi(V[1])$ and $\ga$ respectively ($i=1,2,\dots$).
%, in coordinates, look exactly as in eq. \eqref{coords} but with arbitrary coefficients $c,d,A$. We compute the multibrackets of $L[1]\oplus \ga$  (see Prop. \ref{voronov})
%applied to several copies of $Q$. 
The multibrackets involving only $\tilde{\Phi}$ are given exactly by 
\eqref{RNQ} and \eqref{RNQ2} since $\ga_{\Delta}^{P_{\Phi}}$ is a $L_{\infty}$-subalgebra of $(L'[1]\oplus \ga)_{\Delta}^{P_{\Phi}}$. Explicitly, they are 
\begin{align*}
%\label{adif}
d(\tilde{\Phi})=[Q_U + [Q_V,\Phi],\tilde{\Phi}] \in \ga
\end{align*}
and
\begin{align*}
%\label{abra}
\{\tilde{\Phi}^1,\tilde{\Phi}^2\}= [[Q_V,\tilde{\Phi}^1],\tilde{\Phi}^2] \in \ga.    
\end{align*}

 Now we compute the multibrackets involving at least one of  $\tilde{Q}_{U}[1]$ or $\tilde{Q}_V[1]$.
For the differential we have in $L[1]\oplus \ga$ :
\begin{align*}
d(\tilde{Q}_U[1])&=-[{Q}_U+{Q}_V,\tilde{Q}_U][1]+ P_{\Phi}(\tilde{Q}_U)=
-[{Q}_U,\tilde{Q}_U][1]+ [\tilde{Q}_U,\Phi] \\
d(\tilde{Q}_V[1])&=-[{Q}_U+Q_V, \tilde{Q}_V][1]+ P_{\Phi}(\tilde{Q}_V)=
-[{Q}_V,\tilde{Q}_V][1]+ 
\frac{1}{k!}[[\dots[\tilde{Q}_V,\underbrace{\Phi],\dots],\Phi}_{k}] 
\end{align*}
where $k=|\tilde{Q}_V|+1$.
For the binary bracket
% and $a\in \ga$ 
we have 
\begin{align}
%\label{bin1}
\{(\tilde{Q}^1_U+\tilde{Q}^1_V)[1],(\tilde{Q}^2_U+\tilde{Q}^2_V)[1]\}&=
(-1)^{|\tilde{Q}^1_U+\tilde{Q}^1_V|}\Big([\tilde{Q}^1_U,\tilde{Q}^2_U]+[\tilde{Q}^1_V,\tilde{Q}^2_V]\Big)[1] \in L[1] \nonumber\\
\label{bin2}
\{\tilde{Q}_U[1],\tilde{\Phi}\}&=  P_{\Phi}[\tilde{Q}_U,\tilde{\Phi}]=  [\tilde{Q}_U,\tilde{\Phi}] \in \ga \\
%\label{bin3}
\{\tilde{Q}_V[1],\tilde{\Phi}\}&=  P_{\Phi}[\tilde{Q}_V,\tilde{\Phi}]
%=[[\tilde{Q}_V,\tilde{\Phi}],\Phi]
\in \ga.\nonumber
\end{align}
From \eqref{bin2} it follows that the only non-zero $n$-brackets with $n\ge 3$ are
\begin{align}
\label{trinary}
\{\tilde{Q}_V[1],\tilde{\Phi}^1,\dots,\tilde{\Phi}^n\}= P_{\Phi}[[\tilde{Q}_V,\tilde{\Phi}^1],\dots,\tilde{\Phi}^n]
%=[[Q_V,\Phi],\Phi]
\in \ga.
\end{align}

In coordinates it is clear that the operation $[\cdot,\tilde{\Phi}]$ sends 
$C(U[1])\otimes C_i(V[1])\otimes V[1]$ to $C(U[1])\otimes C_{i-1}(V[1])\otimes V[1]$.
As $\tilde{Q}_V\in \chi(V[1]) \cong \sum_{i=1}^{dim(V)}C_i(V[1])\otimes V[1]$, it is clear from eq. \eqref{trinary} that all $n$-brackets vanish for $n> dim(V)+1$.\\

To write down the Maurer-Cartan elements , we can  use eq. \eqref{MaCa} and the formulae for the multibrackets derived above.  Alternatively, by virtue of Cor. \ref{cor:LA}, we know that Maurer-Cartan elements $\tilde{Q}= \tilde{Q}_U[1]+ \tilde{Q}_V[1]+ \tilde{\Phi}$ are characterized by the equations $[Q_U+ \tilde{Q}_U,Q_U+ \tilde{Q}_U]=0$, $[Q_V+ \tilde{Q}_V,Q_V+ \tilde{Q}_V]=0$ and by the equation obtained replacing $Q_U$ by $Q_U+\tilde{Q}_U$ (and similarly for $Q_V$,$\Phi$)
%can plug   $Q_U+ \tilde{Q}_U\in \chi_1(U[1])$, $Q_V+ \tilde{Q}_V\in \chi_1(V[1])$ and  $\Phi+ \tilde{\Phi}\in C_1(U[1])\otimes \chi_{-1}(V[1])$ 
in eq.  \eqref{MCQ} . {The first two equations are equivalent to
\begin{align}\label{MCLAU}
[Q_U,\tilde{Q}_U]+\frac{1}{2}[\tilde{Q}_U,\tilde{Q}_U]=0\\
\label{MCLAV} [Q_V,\tilde{Q}_V]+\frac{1}{2}[\tilde{Q}_V,\tilde{Q}_V]=0
\end{align}
while the third equation reads}
\begin{align}\label{MCLA}
%\label{MC}
0=&[\tilde{Q}_U,\Phi]+
\frac{1}{2}[[\tilde{Q}_V,\Phi],\Phi]
+[Q_U+[Q_V,\Phi], \tilde{\Phi}]\\
+&[\tilde{Q}_U, \tilde{\Phi}]
+[[\tilde{Q}_V, \tilde{\Phi}],\Phi]
+\frac{1}{2}[[Q_V, \tilde{\Phi}],\tilde{\Phi}] \nonumber\\
+&\frac{1}{2}[[\tilde{Q}_V, \tilde{\Phi}], \tilde{\Phi}]\nonumber.
\end{align}
\end{proof}

\subsubsection{Equivalences of Lie algebras morphisms}

Consider the $\li[1]$-algebra whose Maurer-Cartan elements are pairs of Lie algebra structures and morphisms between them, that is, the $\li[1]$-algebra $\cL:=(L'[1]\oplus \ga)_{\Delta=0}^{P }$ as in  Cor. \ref{pairMC}. Here we discuss the natural equivalence on the set of Maurer-Cartan elements, see \S \ref{sec:sym}.

Elements of $\cL_{-1}$ are of the form
$$z=(z_U[1],z_V[1],z_\ga)\in \;\chi_0(U[1])[1]\;\oplus \; \chi_0(V[1])[1]\;\oplus \;  V[1].$$
%where $\chi_0(U[1])\cong End(U)$ and similarly for $V$.
Restricting the binary bracket $\{\cdot,\cdot\}_2$ to $\cL_{-1}$   and using the identifications at the beginning of \S \ref{sec:lamorf} we obtain
%\footnote{We identify $End(U)$  and $\chi_0(U[1])$ by $\phi \mapsto \Phi$ where $[\Phi,\iota_{X}]=\iota_{\phi(X)}$ for all $X\in U$.} 
the ordinary Lie algebra
$$End(U)\times (End(V)\ltimes V)$$ where 
 $End(U)$ and $End(V)$ are endowed with the commutator bracket, $V$ is abelian and    $[A,f]=Af\in V$ for   $A\in End(V)$ and $f\in V$. 
%given by the commutator bracket on $End(U)$ and $End(V)$, the Lie bracket $[\cdot,\cdot]_V$ on $V$ induced by $Q_V$, and further the bracket between $A\in End(V)$ and $f\in V$ is $Af\in V$. Hence the Jacobi identity is not satisfied. By ``truncation''  we obtain the Lie algebra
%$End(U)\times (End_{der}(V)\ltimes V)$,
%where $End_{der}(V)$ consists of derivations of the bracket $[\cdot,\cdot]_V$ (it coincides with $End(V)\cap ker(d)$).

Maurer-Cartan elements  lie in $\cL_0$, so they are of the form 
$$m=(m_U[1],m_V[1],m_\ga)\in \;\chi_1(U[1])[1]\;\oplus \;  \chi_1(V[1])[1]\;\oplus \;  (U[1])^*\otimes V[1],$$ and as described at the beginning of \S \ref{sec:lamorf} their components correspond respectively to a Lie bracket $[\cdot,\cdot]_{m_U}$ on $U$, a Lie bracket $[\cdot,\cdot]_{m_V}$ on $V$, and a  Lie algebra morphism $\phi \colon U\to V$.
 By degree reasons   eq. \eqref{gaugezerodef} reads simply 
\begin{align}\label{gaugezerodefLA}
\mathcal{Y}^z|_m=&\;\;\;\;\;\;[z_U,m_U][1]\;\oplus \;[z_V,m_V][1]\;\oplus \; [z_U+z_V,m_\ga]
+[[m_V,z_\ga],m_\ga] \\
\in &T_{z}\Big(\chi_1(U[1])[1]\;\oplus \;  \chi_1(V[1])[1]\;\oplus \;  (U[1])^*\otimes V[1]\Big) \nonumber. 
\end{align}
The assignment $z \mapsto \mathcal{Y}^z$ vector field is {not} a Lie algebra action:  $z^1=(0,0,z_\ga^1)$ and $z^2=(0,0,z_\ga^2)$ commute, but the vector fields $\mathcal{Y}^{z^1}$ and $\mathcal{Y}^{z^2}$
do not commute.
% (this is clear from eq. \eqref{thirdc} below, as $[{z_\ga^1}, {z_\ga^1}]_{m_V}\neq 0$ for a generic $m_V$). 
However restricting suitably we obtain an infinitesimal action, which integrates to the group action of symmetries given in \cite[\S 3]{yaelnew}:
\begin{prop}
The assignment 
$End(U)\times End(V) \to \chi(MC(\cL)), z \mapsto \mathcal{Y}^z$ \emph{is} a Lie algebra morphism. 
It integrates to the group action 
\begin{align*}
\Big(GL(U)\times GL(V) \Big)\times MC(\cL) &\to MC(\cL)\\
(g,h)\;,\;\Big([\cdot,\cdot]_{m_U},[\cdot,\cdot]_{m_U},\phi\Big)& \mapsto \Big(g^*([\cdot,\cdot]_{m_U}), h^*([\cdot,\cdot]_{m_V}), h\circ \phi \circ  g^{-1}\Big).
\end{align*} 
Here the Lie bracket $g^*([\cdot,\cdot]_{m_U})$ is defined as  $g  [g^{-1}\cdot,g^{-1}\cdot]_{m_U}$, and similarly for $h^*([\cdot,\cdot]_{m_V})$. 

The equivalence classes induced by the singular distribution $\cD:=\{\mathcal{Y}^z: z\in \cL_{-1}\}$ on $MC$ agree with the orbits of the this action.
\end{prop}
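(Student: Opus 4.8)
The plan is to establish the final Proposition in three pieces: first the Lie algebra morphism statement, then the integration to a group action, then the identification of orbits with gauge-equivalence classes. For the Lie algebra morphism statement, I would restrict attention to $z=(z_U[1],z_V[1],0)\in End(U)\times End(V)\subset \cL_{-1}$. By Remark~\ref{rem:d0}, since the differential of $\cL=(L'[1]\oplus\ga)^P_0$ vanishes, the binary bracket makes $\cL_{-1}$ into an ordinary Lie algebra and $z\mapsto (\cY^z)_{lin}=\{z,\cdot\}$ is a Lie algebra morphism on the \emph{linear part}. The key point is that for $z$ with vanishing $\ga$-component, the vector field $\cY^z$ is already linear: inspecting eq.~\eqref{gaugezerodefLA}, the only potentially higher-order term $[[m_V,z_\ga],m_\ga]$ drops out when $z_\ga=0$, leaving $\cY^z|_m=[z_U,m_U][1]\oplus[z_V,m_V][1]\oplus[z_U+z_V,m_\ga]$, which is linear in $m$. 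Hence on this subalgebra $\cY^z=(\cY^z)_{lin}$ and the assignment is a genuine Lie algebra morphism into $\chi(MC(\cL))$, with image tangent to $MC(\cL)$ by the general theory of \S\ref{sec:sym}.

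For the integration statement, I would compute the flow of $\cY^z$ for $z=(z_U[1],z_V[1],0)$. Writing $g_t=e^{t z_U}\in GL(U)$ and $h_t=e^{t z_V}\in GL(V)$, the differential equation $\frac{d}{dt}m_U=[z_U,m_U]$ (bracket of vector fields on $U[1]$) is precisely the infinitesimal version of the adjoint/pushforward action $m_U\mapsto (g_t)_*m_U$, which on the corresponding bracket $[\cdot,\cdot]_{m_U}$ is $g_t[g_t^{-1}\cdot,g_t^{-1}\cdot]_{m_U}=:g_t^*([\cdot,\cdot]_{m_U})$; similarly for $V$. For the $\ga$-component, $\frac{d}{dt}m_\ga=[z_U+z_V,m_\ga]$ integrates, under the identification of $m_\ga$ with a linear map $\phi\colon U\to V$ and using the conventions of \S\ref{sec:lamorf} (that $[\Phi,\iota_X]=\iota_{\phi(X)}$, $[z_U,\iota_X]=\iota_{z_U X}$ acting by the endomorphism), to $\phi\mapsto h_t\circ\phi\circ g_t^{-1}$. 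This gives the claimed $GL(U)\times GL(V)$-action; one checks it is indeed an action (a group homomorphism into $\mathrm{Diff}(MC(\cL))$) directly from these formulas, and that it restricts to $MC(\cL)$ since pushing forward Lie brackets along isomorphisms preserves the Jacobi identity and post/pre-composing a morphism with isomorphisms preserves the morphism property --- consistent with Cor.~\ref{cor:LA}.

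For the orbit/equivalence-class identification, the distribution $\cD$ is spanned by all $\cY^z$ with $z\in\cL_{-1}$, not only those with $z_\ga=0$, so I must show that adding the extra generators $\cY^{(0,0,z_\ga)}$ does not enlarge the equivalence classes beyond the $GL(U)\times GL(V)$-orbits. The idea is that at a Maurer-Cartan point $m=(m_U,m_V,\phi)$, the vector $\cY^{(0,0,z_\ga)}|_m=[m_U+m_V,z_\ga]+[[m_V,z_\ga],m_V]$ is tangent to the orbit through $m$: concretely, it is the infinitesimal generator corresponding to conjugating $\phi$ by the inner-type transformations generated by $z_\ga$ together with the $m_V$-bracket --- more precisely, I expect $\cY^{(0,0,z_\ga)}|_m$ to equal $\cY^{(0,\,ad_{z_\ga},\,0)}|_m$ for a suitable element, or to lie in the span of the $\cY^{(z_U,z_V,0)}$ at that point, because the perturbation of $\phi$ by such a $z_\ga$ is exactly a perturbation of the form "post-compose with a $1$-parameter family of automorphisms of $(V,m_V)$". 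Thus $\cD|_m$ coincides with the tangent space to the orbit at every $m\in MC(\cL)$, and since both the orbits and the $\cD$-equivalence classes are the integral leaves (the orbits being connected and $\cD$-integrable by \S\ref{sec:sym}), they agree. The main obstacle I anticipate is precisely this last step: verifying carefully that the "extra" gauge directions coming from $z_\ga\neq 0$ are already tangent to the $GL(U)\times GL(V)$-orbits --- this requires unwinding the correspondence between the component $m_\ga$ and the morphism $\phi$, and checking that the quadratic term $[[m_V,z_\ga],m_V]$ in eq.~\eqref{gaugezerodefLA} conspires correctly with the linear term so that the total is the derivative of $h_t\circ\phi\circ g_t^{-1}$ along an appropriate curve in $GL(U)\times GL(V)$ through the identity.
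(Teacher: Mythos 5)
Your proposal follows essentially the same route as the paper's proof: linearity of $\mathcal{Y}^z$ for $z_\ga=0$ plus Remark \ref{rem:d0} for the morphism claim, explicit integration of the flow via matrix exponentials for the group action, and reduction of the extra gauge directions to orbit directions for the last claim. The one step you leave hedged closes exactly as you anticipate: from eq. \eqref{gaugezerodefLA} one has $\mathcal{Y}^{(0,0,z_\ga)}|_m=[[m_V,z_\ga],m_\ga]$ (not $[m_U+m_V,z_\ga]+[[m_V,z_\ga],m_V]$ as you wrote), and this equals $\mathcal{Y}^{(0,[m_V,z_\ga],0)}|_m$ because the would-be extra component $[[m_V,z_\ga],m_V]$ vanishes by $[m_V,m_V]=0$, so $\cD|_m$ is spanned by the $\mathcal{Y}^{(z_U,z_V,0)}|_m$ alone.
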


\begin{proof}
Notice that for $z\in End(U)\times End(V)$ the vector field $\mathcal{Y}^z$ is linear, hence $z\mapsto \mathcal{Y}^z$ is a Lie algebra morphism by Remark \ref{rem:d0}.
We   compute the integral curve  of $\mathcal{Y}^z$ starting at $m=(m_U[1],m_V[1],m_\ga)\in MC(\cL)$.

The first component of $\mathcal{Y}^z$ is $[z_U,\cdot][1]$. Its integral curve starting at $m_U[1]$ is $t\mapsto e^{t[z_U,\cdot]}m_U[1]$, since the latter forms a 1-parameter group and differentiates to $[z_V,\cdot]$ at time zero.
The Lie bracket on $U$ induced by $e^{[z_U,\cdot]}m_U[1]$ is $(exp(z_U))^*([\cdot,\cdot]_{m_U})$ where $exp(z_U)$ is the usual matrix exponential of 
$z_U\in \g\l(U)$ (this follows from the fact that $e^{[z_U,\cdot]}$ is an automorphism of $[\cdot,\cdot]$).
The same argument applies to the second component of $\mathcal{Y}^z$. 

For the third component, the integral curve of $[z_U+z_V,\cdot]$ starting at $m_{\ga}$ is $t\mapsto e^{t[z_U+z_V,\cdot]}m_{\ga}$.
The element $e^{[z_U+z_V,\cdot]}m_{\ga} \in (U[1])^*\otimes V[1]$ corresponds to $exp(z_V)\circ \phi \circ exp(-z_U)\colon U \to V$. This shows that the group action in the statement of this proposition integrates the given Lie algebra action.

{For the last statement we fix $m\in MC(\cL)$ and show that $$\cD_m=\{\mathcal{Y}^z|_m : z=(z_U[1],z_V[1],0)\}.$$ To this aim, just notice that
 $\mathcal{Y}^{(0,0,z_\ga)}|_m=\mathcal{Y}^{(0,[m_V,z_\ga],0)}|_m$
for all $z_\ga\in V[1]$, as a consequence of $[m_V,m_V]=0$.}
\end{proof}

\subsection{Subalgebras of Lie algebras}\label{sec:subalg}

Let $\g$ be a  {finite dimensional} Lie algebra, $U\subset\g$ a Lie subalgebra. We study deformations of the Lie algebra structure on $\g$ and of the subspace $U$ as a Lie subalgebra, similarly to {Richardson}
  \cite{RichSubalg}. 
%This amounts to studying deformations of the inclusion $U\hookrightarrow \g$ in the sense of \S \ref{sec:lamorf}, but see Remark \ref{rem:definc}.

 {At first,} let $U\subset \g$ be simply a subspace.
 We denote by $Q_{\g}\in \chi(\g[1])$ the homological vector field encoding the Lie algebra structure on $\g$. Choose   a subspace $V$ in $\g$ complementary to $U$. Given a linear map $\phi \colon U \to V$, we view it as an element  $\Phi\in C_1(U[1])\otimes \chi_{-1}(V[1])\subset \chi_0(\g[1])$   defined by  
$[\Phi,\iota_{X}]=\iota_{\phi(X)}$ for all $X\in U$.

\begin{lem}\label{keysubalg}
%\footnote{When $V$ is a Lie subalgebra and $\g=U\oplus V$ as Lie algebras, we are in the situation of  Lemma \ref{keyLA}.} 
  The following quadruple forms a curved V-data:
\begin{itemize}
\item the graded Lie algebra $L:=\chi(\g[1])$
\item its abelian subalgebra $\ga:=C(U[1])\otimes V[1]$
\item the natural projection $P \colon L \to \ga$ with kernel 
\begin{equation*}
ker(P)=\Big(C(U[1])\otimes C_{\ge 1}(V[1])\otimes V[1]\Big)\; \oplus \;\Big(C(\g[1])\otimes U[1]\Big)
\end{equation*}
\item $\Delta:=Q_{\g}$,
\end{itemize}
hence by Thm. \ref{voronovderived} we obtain a  curved $L_{\infty}[1]$-structure  $\ga^P_{\Delta}$. 

$\Phi \in \ga_0$ is a MC element in  $\ga_{\Delta}^P$ if{f} $graph(\phi)$ is a Lie subalgebra of $\g$.

Further, the above quadruple forms a V-data if{f} $U$ is a Lie subalgebra of $\g$.
\end{lem}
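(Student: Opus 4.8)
The plan is to verify the four conditions in Definition \ref{vdata} (equivalently Def. \ref{vdata} in the curved case), then identify the Maurer--Cartan condition, and finally determine when the data is honestly a V-data rather than a curved one. Most of the verification mirrors the proof of Lemma \ref{keyLA}, the only genuinely new feature being that now there is a single ambient Lie algebra $\g=U\oplus V$ rather than a product $U\times V$, so the decomposition $\chi(\g[1])=\chi(U[1])\oplus\cdots$ used in Lemma \ref{keyLA} is no longer available; one must argue more carefully in coordinates.

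First I would check that $\ga=C(U[1])\otimes V[1]$ is an abelian subalgebra of $L=\chi(\g[1])$: in coordinates $\{u_i\}$ on $U[1]$ and $\{v_\alpha\}$ on $V[1]$, elements of $\ga$ are vector fields of the form $f(u)\pd{v_\alpha}$, and the bracket of two such vanishes since neither contains $v$'s or $\pd{u}$'s. Next, $P\colon L\to\ga$ is the obvious linear projection, and I would check $\ker(P)$ is a Lie subalgebra: as in Lemma \ref{keyLA}, the cleanest way is to observe that $\ker(P)$ consists exactly of the vector fields on $\g[1]$ that are tangent to the submanifold $U[1]\subset\g[1]$ (i.e.\ the locus $\{v=0\}$); tangency to a submanifold is preserved under the Lie bracket of vector fields, so $\ker(P)$ is closed under $[\cdot,\cdot]$. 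Concretely, a vector field is tangent to $\{v=0\}$ iff its $\pd{v_\alpha}$-components vanish on $\{v=0\}$, i.e.\ each such component lies in the ideal generated by the $v_\alpha$'s, which together with the unrestricted $\pd{u_i}$-components gives precisely the stated $\ker(P)=\big(C(U[1])\otimes C_{\ge1}(V[1])\otimes V[1]\big)\oplus\big(C(\g[1])\otimes U[1]\big)$. Finally $\Delta:=Q_\g\in L_1$, with $[\Delta,\Delta]=[Q_\g,Q_\g]=0$ by the Jacobi identity for $\g$; since a priori $Q_\g\notin\ker(P)$, this is only a \emph{curved} V-data, and Thm.~\ref{voronovderived} yields the curved $\li[1]$-structure $\ga^P_\Delta$.

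For the Maurer--Cartan statement, by Remark \ref{MCexp} (or directly from \eqref{eq:adp}) $\Phi\in\ga_0$ is MC iff $P e^{[\cdot,\Phi]}Q_\g=0$. I would unwind this: $\Phi$ is the vector field implementing the graph embedding $U\to U\oplus V=\g$, $X\mapsto X+\phi(X)$, and $e^{[\cdot,\Phi]}Q_\g$ is $Q_\g$ pushed forward by the corresponding automorphism of $\g[1]$; applying $P$ and setting it to zero says exactly that the pushed-forward homological vector field is tangent to $U[1]$, which—translating back through the graph automorphism—is equivalent to $Q_\g$ being tangent to $\mathrm{graph}(\phi)[1]$, i.e.\ $\mathrm{graph}(\phi)$ is closed under $[\cdot,\cdot]_\g$, i.e.\ a Lie subalgebra. (Alternatively, and perhaps more in the spirit of Lemma \ref{keyLA}, one can compute the multibrackets of $\ga^P_\Delta$ in coordinates, see that only finitely many survive, and read off that the MC equation is the subalgebra condition for the graph; I expect the first argument to be shorter.)

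The last assertion is the one requiring a little care. The data is a (non-curved) V-data precisely when $\Delta=Q_\g\in\ker(P)$, i.e.\ when $Q_\g$ is tangent to $U[1]=\{v=0\}$. I would show this holds iff $U$ is a Lie subalgebra of $\g$: tangency of $Q_\g$ to $\{v=0\}$ means that for $X,Y\in U$ the bracket $[X,Y]_\g$ has vanishing $V$-component, which is exactly $[U,U]_\g\subseteq U$. More precisely, in coordinates $Q_\g=-\tfrac12 e^\gamma_{ij}w_iw_j\pd{w_\gamma}$ where $w$ ranges over both $u$ and $v$; the $\pd{v_\alpha}$-components evaluated at $v=0$ are $-\tfrac12 e^\alpha_{ij}u_iu_j$, which vanish identically iff the structure constants $e^\alpha_{ij}$ (with both lower indices in $U$) are zero, i.e.\ iff $U$ is a subalgebra. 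I would then note the sanity check: in that case $\Phi=0$ is automatically a MC element (consistent with $\mathrm{graph}(0)=U$ being a subalgebra), and Thm.~\ref{voronovderived} upgrades $\ga^P_\Delta$ to an honest $\li[1]$-algebra. The main obstacle here is purely expository—being careful about the identification of $\ker(P)$ with tangent-to-$U[1]$ vector fields when $\g[1]$ does not split as a product of graded manifolds in the way $(U\times V)[1]$ did—rather than any real mathematical difficulty.
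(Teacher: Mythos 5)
Your proposal is correct and follows essentially the same route as the paper: the V-data verification is deferred to the argument of Lemma \ref{keyLA} (kernel of $P$ = vector fields tangent to $U[1]$), the Maurer--Cartan condition is unwound via Remark \ref{MCexp} using that $e^{[\cdot,\Phi]}$ is a Lie algebra automorphism sending $\iota_X$ to $\iota_{X+\phi(X)}$, and the final assertion reduces to $Q_\g\in\ker(P)$ iff $[U,U]_\g\subseteq U$. The only cosmetic difference is that you phrase the MC step as tangency of the pushed-forward vector field to $\mathrm{graph}(\phi)[1]$, whereas the paper evaluates $[[e^{-[\Phi,\cdot]}Q_\g,\iota_X],\iota_Y]$ explicitly; these are the same computation.
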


\begin{proof} To show that the above quadruple forms a curved V-data proceed as in the proof of Lemma \ref{keyLA}.
% and  use the fact that $Q_{\g}\in ker(P)$.
%The latter fact is equivalent to $[[Q_{\g},\iota_X],\iota_Y]\in U[1]$
%  for all $X,Y\in U$, which holds since $U$ is a Lie subalgebra of $\g$.

Rem. \ref{MCexp}  says that $\Phi$ is a Maurer-Cartan element in $\ga_{\Delta}^P$ if{f} 
$
e^{-[\Phi,\cdot]}Q_{\g}\in ker(P).        
$
This condition is equivalent  to asking that for all $X,Y\in U$:
\begin{align*}
\left[\left[e^{-[\Phi,\cdot]}Q_{\g},\iota_X\right],\iota_Y\right]\in U[1]\end{align*}
Using the fact that $e^{-[\Phi,\cdot]}$ is a Lie algebra automorphism of $L$ (to pull it out of the brackets) and that $e^{[\Phi,\cdot]}\iota_X=\iota_X+[\Phi,\iota_X]=\iota_{X+\phi(X)}$, we see that the above is equivalent to
$$[X+\phi(X),Y+\phi(Y)]\in \{Z+\phi(Z): Z\in U\}=graph(\phi),$$
i.e. to $graph(\phi)$ being a Lie subalgebra of $\g$.

The last statement  can be proven  as follows: $Q_{\g}\in ker(P)$
  is equivalent to $[[Q_{\g},\iota_X],\iota_Y]\in U[1]$
  for all $X,Y\in U$, which in turn means that  $U$ is a Lie subalgebra of $\g$. (Alternatively, it follows from the above noticing that   $0$ is a Maurer-Cartan element of $\ga_{\Delta}^P$ if{f} $PQ_{\g}=0$.)
\end{proof}

 %\begin{rem}
%The MC equation of  $\ga_{\Delta}^P$ is at most cubic.
%\end{rem}

Lemma \ref{keysubalg}  
 allow us to apply  Thm. \ref{machine}  with $\Phi=0$. 
 We  deduce:
 
 \begin{cly}\label{cor:subalg}
Let $\g$ be a Lie algebra, $U\subset\g$ a Lie subalgebra. Choose    a subspace $V\subset \g$ complementary to $U$, and let
$(L,\ga,P,\Delta)$ {be the V-data} as in Lemma \ref{keysubalg}.
% \begin{itemize}
%\item[1)] Let   $\tilde{\phi}\colon U \to V$ be a linear map. Then
%$$graph(\tilde{\phi}) \text{ is a Lie subalgebra of  }\g 
%\;\;\;\Leftrightarrow \;\;\;  {\tilde{\Phi}} \text{ is a MC element of }%\ga_{\Delta}^{P}.$$   
%\item[2)] 

For all $\tilde{Q}_{\g}\in L_1$   
and for all
 linear maps $\tilde{\phi}\colon   U \to V$: 
\begin{align*}
&\begin{cases}
 Q_{\g}+\tilde{Q}_\g   \text{ defines a Lie algebra structure on }\g \\
  graph(\tilde{\phi})  \text{ is a Lie subalgebra of it }
  \end{cases}
  \\
 \Leftrightarrow  
  &(\tilde{Q}_{\g}[1] ,\tilde{\Phi}) \text{ is a MC element of } (L[1]\oplus \ga)_\Delta^P.
\end{align*} 
%\end{itemize}
\end{cly}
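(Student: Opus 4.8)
The plan is to deduce Corollary~\ref{cor:subalg} directly from Theorem~\ref{machine} by specializing it to the V-data of Lemma~\ref{keysubalg} with $\Phi=0$. First I would observe that Lemma~\ref{keysubalg} tells us that $(L,\ga,P,\Delta)$ with $\Delta=Q_\g$ is a (curved) V-data, and — since $U$ is assumed to be a Lie subalgebra — it is in fact an honest V-data, i.e.\ $\Delta=Q_\g\in Ker(P)$. Before applying Theorem~\ref{machine} one has to check that this V-data is \emph{filtered} in the sense of Def.~\ref{triple}; this is entirely analogous to Remark~\ref{rem:convla}: decompose $L=\chi(\g[1])$ into the subspaces $L^k$ spanned by monomials whose total number of $u$'s and $\pd{v}$'s (in coordinates adapted to the splitting $\g=U\oplus V$) equals $k+1$, set $\cF^nL:=\oplus_{k\ge n}L^k$, and verify conditions a), b), c) of Def.~\ref{triple} — the bracket is of filtration degree $0$, $\ga_0=(C(U[1])\otimes V[1])_0\subset\cF^1L$ because every element of $\ga$ contains at least one $\pd{v}$, and $P$ has filtration degree $0$ since it is a projection onto a sub-collection of monomials. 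Alternatively, as noted in Remark~\ref{rem:convla}, one can invoke finite-dimensionality of $\g$ directly.

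Next, the hypothesis of Theorem~\ref{machine} requires a chosen Maurer-Cartan element $\Phi\in MC(\ga_\Delta^P)$; here we take $\Phi=0$, which lies in $MC(\ga_\Delta^P)$ precisely because $\Delta=Q_\g\in Ker(P)$ (equivalently $P\Delta=PQ_\g=0$, which is exactly the condition that $U$ be a subalgebra, established in Lemma~\ref{keysubalg}). With $\Phi=0$ one has $P_\Phi=P\circ e^{[\cdot,0]}=P$, so $(L[1]\oplus\ga)_\Delta^{P_\Phi}=(L[1]\oplus\ga)_\Delta^{P}$. Theorem~\ref{machine}, applied with $\tilde\Phi$ identified with the vector field $\tilde\Phi$ associated to $\tilde\phi$ as in Lemma~\ref{keysubalg} and with $\tilde\Delta=\tilde Q_\g\in L_1$, then gives the equivalence
\begin{align*}
\begin{cases}
[Q_\g+\tilde Q_\g,\;Q_\g+\tilde Q_\g]=0\\
\tilde\Phi\in MC(\ga_{Q_\g+\tilde Q_\g}^{P})
\end{cases}
\;\Longleftrightarrow\;
(\tilde Q_\g[1],\tilde\Phi)\in MC\big((L[1]\oplus\ga)_{Q_\g}^{P}\big).
\end{align*}
It remains to translate the two conditions on the left into the geometric statement in the Corollary: $[Q_\g+\tilde Q_\g,Q_\g+\tilde Q_\g]=0$ says exactly that $Q_\g+\tilde Q_\g$ defines a Lie algebra structure on $\g$ (Jacobi identity $\Leftrightarrow$ homological vector field), and by the second displayed statement of Lemma~\ref{keysubalg} — applied now with $Q_\g$ replaced by the deformed bracket $Q_\g+\tilde Q_\g$, which is legitimate since Lemma~\ref{keysubalg} holds for \emph{any} Lie algebra structure on $\g$ — the condition $\tilde\Phi\in MC(\ga_{Q_\g+\tilde Q_\g}^P)$ is equivalent to $graph(\tilde\phi)$ being a Lie subalgebra of $(\g, Q_\g+\tilde Q_\g)$.

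The only genuinely delicate point is the bookkeeping of \emph{which} V-data Lemma~\ref{keysubalg} is being invoked for on the right-hand side of the equivalence: Theorem~\ref{machine} produces Maurer-Cartan elements of $\ga_{\Delta+\tilde\Delta}^P=\ga_{Q_\g+\tilde Q_\g}^P$, and one must re-read the Maurer-Cartan characterization of Lemma~\ref{keysubalg} with $\Delta=Q_\g+\tilde Q_\g$ rather than $\Delta=Q_\g$. Since $V$ (a complement of $U$) and the projection $P$ do not depend on the Lie bracket on $\g$, the proof of that characterization — based on Rem.~\ref{MCexp} and the automorphism property of $e^{-[\Phi,\cdot]}$ — goes through verbatim for the deformed bracket, so no new work is needed. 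Everything else is formal substitution into Theorem~\ref{machine}, so I expect no serious obstacle; the verification that the V-data is filtered is the most computational part, and it is routine.
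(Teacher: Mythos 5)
Your proposal is correct and follows exactly the route the paper takes: the paper's own justification is simply to invoke Lemma \ref{keysubalg} and apply Theorem \ref{machine} with $\Phi=0$ (so that $P_\Phi=P$), with the filteredness checked as in Remark \ref{rem:convla}. The details you supply — in particular that the Maurer--Cartan characterization of Lemma \ref{keysubalg} must be re-read for the deformed, possibly curved V-data $(L,\ga,P,Q_\g+\tilde Q_\g)$, which is legitimate because $V$ and $P$ do not depend on the bracket — are exactly the points the paper leaves implicit.
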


\begin{rem}
{The proof   that $(L,\ga, P,\Delta)$ is a  filtered V-data is given in Remark \ref{rem:convla}.}
% Notice that condition d) in the definition of filtered V-data is not satisfied, but this does not affect the convergence, see the footnote to Prop. \ref{prop:fil}.
\end{rem} 
  
 \begin{rem}\label{rem:definc}
By Cor. \ref{cor:subalg}, the Maurer-Cartan elements of     $(L[1]\oplus \ga)_\Delta^P$
 are in bijection  with deformations of the Lie algebra structure on $\g$ 
 %for which the subspace $U$ is a Lie subalgebra 
 and deformations of the subspace $U$ as a Lie subalgebra. 
 
Applying Cor. \ref{cor:LA} to the Lie algebra $U$, to the Lie algebra $\g$ and to the inclusion $i \colon U \hookrightarrow \g$, we obtain an
$L_{\infty}[1]$-algebra whose Maurer-Cartan elements are  deformations of the Lie algebra structure on $\g$ and deformations of $i$
to linear maps $i+\tilde{i} \colon U \rightarrow \g$ whose image is a  Lie subalgebra of the new Lie algebra structure on $\g$. Notice that the two Maurer-Cartan sets are quite different,
% the latter allows for \emph{arbitrary} deformations of the Lie algebra structure on $\g$, and 
as different maps
$i+\tilde{i}$ can have the same image.
\end{rem}

\subsection{Maurer-Cartan elements of  \li-algebra structures}\label{sec:MCli}
 
Fix  a (possibly infinite dimensional) graded vector space $W$.
We show that the space of pairs 
$$\text{($\li[1]$-algebra structures on $W$, Maurer-Cartan elements for this structure)}$$ 
is governed by a Maurer-Cartan equation. {We will ignore all convergence issues in this subsection; they are automatically dealt with if one works formally, see Lemma \ref{lem:eps}.} 
 
{We refer to \cite{AMM} for the background material on coderivations.}  
  Recall that \li[1]-algebra structures on $W$ are in bijection with degree 1 self-commuting coderivations $\Theta$   on  
$\overline{SW}:=\oplus_{k=1}^\infty S^k W$. The {canonical} embedding $\alpha \colon W \hookrightarrow Coder(SW)$, induces a  {canonical} bracket-preserving embedding  $\mathcal{J} \colon Coder(\overline{SW})\hookrightarrow Coder(SW)$ {whose image annihilates $1\in SW$}. One can prove {that all $\li[1]$-algebra structures are obtained by the derived bracket construction:}

\begin{prop}\label{keyli}
Let $W$ be an \li[1]-algebra,  and $ {\Theta}$ the corresponding coderivation of $\overline{SW}$.  
The following quadruple forms a V-data: 
\begin{itemize}
\item the graded Lie algebra $L:=Coder(SW)$
\item its abelian subalgebra $\ga:=\{\alpha_w:w\in W\}$ 
\item the   projection 
%\begin{align*}
$P \colon L \to \ga\;,\;
\tau \mapsto \alpha_{\tau(1)}$
%$$L=\ga\oplus \{\Theta\in CoDer(SW):\Theta(1)=0\}.$$
% \end{align*}
\item $\Delta:=\mathcal{J}\Theta$.
\end{itemize}
The induced $L_{\infty}[1]$-structure on $\ga$ given by   Thm. \ref{voronovderived} is exactly the original \li[1]-structure on $W$, under the canonical identification $W\cong \ga, w \mapsto \alpha_w$. \end{prop}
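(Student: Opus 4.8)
The plan is to verify the four defining conditions of a V-data (Def.~\ref{vdata}) and then to identify the derived brackets on $\ga$ with the original multibrackets $\{\cdots\}$ on $W$. First I would recall the structural facts about $Coder(SW)$ collected in \S\ref{sec:idli}: every coderivation of the (coaugmented) cofree cocommutative coalgebra $SW$ is uniquely determined by its composition with the projection $SW\to W$, which gives the Lie algebra isomorphism between $Coder(SW)$ and $\bigoplus_{k\ge 0}\mathrm{Hom}(S^kW,W)$; in particular the constant part $k=0$ produces the degree-shifted copy $\ga=\{\alpha_w:w\in W\}$, and these $\alpha_w$ mutually commute because $SW$ is \emph{cocommutative} (two constant-valued coderivations annihilate each other's image), so $\ga$ is indeed an abelian Lie subalgebra. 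For the projection $P\colon\tau\mapsto\alpha_{\tau(1)}$, the key point is that $\ker(P)$ consists of the coderivations annihilating $1\in SW$, i.e. the coderivations whose corange map $S^0W\to W$ vanishes; by the description of the bracket on $Coder(SW)$ in terms of the Hom-pieces, the composition of two such is again such, so $\ker(P)$ is a Lie subalgebra, and it contains $\mathcal J\Theta$ since by construction $\mathcal J$ has image in the coderivations annihilating $1$. Finally $[\Delta,\Delta]=[\mathcal J\Theta,\mathcal J\Theta]=\mathcal J[\Theta,\Theta]=0$ because $\mathcal J$ is bracket-preserving and $\Theta$ is self-commuting (this is exactly the statement that $W$ is an $\li[1]$-algebra); and $\Delta$ has degree $1$. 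This establishes that the quadruple is a V-data.

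Next I would compute the derived brackets of Thm.~\ref{voronovderived}. By eq.~\eqref{eq:adp}, for $w_1,\dots,w_n\in W$ we must evaluate
\[
P\big[[\dots[\mathcal J\Theta,\alpha_{w_1}],\alpha_{w_2}],\dots,\alpha_{w_n}\big]
\]
and show it equals $\alpha_{\{w_1,\dots,w_n\}}$. The strategy is to track how the composition $SW\xrightarrow{}W$ of a coderivation behaves under iterated bracketing with the constant coderivations $\alpha_{w_i}$: bracketing with $\alpha_{w_i}$ has the effect of ``plugging $w_i$ into one slot'' of the symmetric-algebra argument (this is the standard computation identifying the Lie bracket of coderivations on $SW$ with the Gerstenhaber-type brackets of the associated Hom-maps), and $P$ then reads off the resulting constant, i.e. the value on $1\in SW$, which is precisely the value of $\mathcal J\Theta$'s underlying multilinear map on $w_1\cdots w_n$. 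Since $\mathcal J$ is defined so that $\mathcal J\Theta$ restricted to $\overline{SW}=\bigoplus_{k\ge1}S^kW$ recovers $\Theta$, and $\Theta$ encodes the multibrackets $\{\cdots\}$ of $W$, the outcome is $\alpha_{\{w_1,\dots,w_n\}}$. I would also check the curved term $\{\emptyset\}=P\Delta=\alpha_{(\mathcal J\Theta)(1)}=0$ since $\mathcal J\Theta$ annihilates $1$, consistent with $W$ being a genuine (uncurved) $\li[1]$-algebra.

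The main obstacle is bookkeeping the Koszul signs and combinatorial coefficients in the iterated-bracket computation: one must make sure that the symmetrization implicit in evaluating a coderivation of $SW$ on a product $w_1\cdots w_n$ matches the graded-symmetry and the normalization of the multibrackets in Def.~\ref{li1}, and that the ``décalage''/shift conventions (the passage between $W$ and $W[1]$, and the $(-1)^{|x|}$-type signs in Voronov's formulas) are applied consistently. Concretely, the heart of the argument is the identity, for a coderivation $\tau$ of $SW$ with corange map $\tau_k\colon S^kW\to W$,
\[
[\tau,\alpha_w]_k(w_1\cdots w_k)=\pm\,\tau_{k+1}(w\cdot w_1\cdots w_k),
\]
with the sign determined by moving $w$ past the $w_i$; once this is in hand, the $n$-fold iteration followed by $P$ gives $\tau_{n}(w_1\cdots w_n)$ up to a sign that is absorbed into the graded-symmetric multibracket, and specializing $\tau=\mathcal J\Theta$ finishes the proof. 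This identity is routine given the cofreeness description of $Coder(SW)$ recalled in \S\ref{sec:idli}, so I expect no conceptual difficulty beyond careful sign-tracking.
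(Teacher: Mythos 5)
Your proposal is correct and follows essentially the same route as the paper: one checks the V-data axioms using that $\mathcal J$ is injective and bracket-preserving and that $\ker(P)$ is exactly the set of coderivations killing $1\in SW$, and then computes the derived brackets by iterated bracketing with the $\alpha_{w_i}$, observing that after evaluation at $1$ all extra terms land in $\bigoplus_{k\ge 2}S^kW$ and die under the projection to $W$ (your ``plug $w_i$ into one slot'' identity is the paper's eq.\ \eqref{twoplus}). One small correction: the $\alpha_w$ do \emph{not} annihilate each other's images ($\alpha_v\alpha_w x=v\cdot w\cdot x\neq 0$ in general); they commute because the symmetric product is graded commutative, which is the one-line computation in Lemma \ref{alfa}.
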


We apply Cor. \ref{pairMC}, choosing $\Theta=0$ above {and restrict to $\{\tau\in Coder(SW):\tau(1)=0\}=Ker(P)\subset L$ (see Rem. \ref{kerPa})}. 
%and Prop. \ref{keyli} 
We obtain:

\begin{cly}\label{cor:mcli}
 %Let $W$ be a finite dimensional graded vector space. Denote by
%$\chi_{>}(W)$ the vector fields on $W$ whose evaluation at the origin vanishes. 
 $\{\tau\in Coder(SW):\tau(1)=0\}[1]\oplus W$, endowed with the    $\li[1]$-algebra structure specified in 
 Cor. \ref{pairMC}, has the following property:
  for all $\tilde{\Theta} \in Coder(\overline{SW})_1$
  % with $\tilde{\Theta}(1)=0$ 
  and $\tilde{\Phi}\in W_0$: 
 \begin{align*}
 &\begin{cases}\tilde{\Theta} \text{ defines an $\li[1]$-algebra structure on }W \\
% and hence defines a } \li-algebra \text{ structure on }V\text{ as in Prop. \ref{}}
\tilde{\Phi} \text{ is a MC element of this $\li[1]$-algebra structure on }W
\end{cases}\\
 \Leftrightarrow 
&\;\;(\mathcal{J}\tilde{\Theta}[1],\tilde{\Phi}) \text{ is a MC element of } \{\tau\in Coder(SW):\tau(1)=0\}[1]\oplus W
\end{align*}
\end{cly}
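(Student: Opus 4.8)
The plan is to deduce Corollary \ref{cor:mcli} directly from Proposition \ref{keyli} together with Corollary \ref{pairMC}, without redoing any derived-bracket computation. First I would take the V-data $(L,\ga,P,\Delta)$ of Proposition \ref{keyli} with the choice $\Theta = 0$, so that $\Delta = \mathcal{J}\Theta = 0$; this is exactly the setting of Corollary \ref{pairMC}, i.e. $(L,\ga,P,0)$ is a (filtered, if one works formally via Lemma \ref{lem:eps}) V-data. Proposition \ref{keyli} with $\Theta=0$ identifies the $\li[1]$-algebra $\ga_0^P$ with the zero $\li[1]$-structure on $W$ under $w\mapsto \alpha_w$; more importantly, for a \emph{deformed} $\Delta = \mathcal{J}\tilde\Theta$ with $\tilde\Theta\in Coder(\overline{SW})_1$ self-commuting, Proposition \ref{keyli} says $\ga^P_{\mathcal{J}\tilde\Theta}$ is precisely the $\li[1]$-structure on $W$ determined by $\tilde\Theta$, and that its Maurer-Cartan elements $\tilde\Phi\in\ga_0\cong W_0$ are exactly the Maurer-Cartan elements of that $\li[1]$-structure.

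Next I would invoke Remark \ref{kerPa} to pass from $L[1]\oplus\ga$ to the $\li[1]$-subalgebra $Ker(P)[1]\oplus\ga$: since $Ker(P)=\{\tau\in Coder(SW):\tau(1)=0\}$ is a graded Lie subalgebra of $L$ preserved by $D=[\Delta,\cdot]=0$, this restriction is legitimate and carries the induced $\li[1]$-structure of Corollary \ref{pairMC}. I would also record that $\mathcal{J}$ maps $Coder(\overline{SW})$ bijectively onto $\{\tau\in Coder(SW):\tau(1)=0\}=Ker(P)$, so $\mathcal{J}\tilde\Theta[1]$ indeed lies in $Ker(P)[1]$, and that $\mathcal{J}$ is bracket-preserving, so $[\mathcal{J}\tilde\Theta,\mathcal{J}\tilde\Theta]=\mathcal{J}[\tilde\Theta,\tilde\Theta]$; hence $\mathcal{J}\tilde\Theta$ self-commutes if and only if $\tilde\Theta$ does, i.e. if and only if $\tilde\Theta$ defines an $\li[1]$-algebra structure on $W$ (using the standard bijection recalled in \S\ref{sec:idli} between such structures and self-commuting degree-$1$ coderivations of $\overline{SW}$).

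Then I would simply quote the Maurer-Cartan characterization in Corollary \ref{pairMC}, applied with $\tilde\Delta = \mathcal{J}\tilde\Theta\in L_1$ (which automatically lies in $Ker(P)$) and $\tilde\Phi\in\ga_0$: the pair $(\mathcal{J}\tilde\Theta[1],\tilde\Phi)$ is a Maurer-Cartan element of $(L[1]\oplus\ga)^P_0$ — equivalently, by Remark \ref{kerPa}, of $Ker(P)[1]\oplus\ga$ — if and only if $[\mathcal{J}\tilde\Theta,\mathcal{J}\tilde\Theta]=0$ and $\tilde\Phi\in MC(\ga^P_{\mathcal{J}\tilde\Theta})$. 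Translating the two conditions on the left through the dictionary of the previous paragraph (the first becomes ``$\tilde\Theta$ is an $\li[1]$-structure'', the second becomes, via Proposition \ref{keyli}, ``$\tilde\Phi$ is a Maurer-Cartan element of that structure'') yields exactly the stated equivalence.

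The only genuine content beyond bookkeeping — and thus the step I expect to need the most care — is verifying that Proposition \ref{keyli} really does apply verbatim to the \emph{perturbed} element $\mathcal{J}\tilde\Theta$ rather than only to the fixed $\Theta$; that is, one must check that $(L,\ga,P,\mathcal{J}\tilde\Theta)$ is a V-data (for which $\mathcal{J}\tilde\Theta\in Ker(P)_1$ and $[\mathcal{J}\tilde\Theta,\mathcal{J}\tilde\Theta]=0$ are the points to confirm) and that the derived-bracket $\li[1]$-structure $\ga^P_{\mathcal{J}\tilde\Theta}$ it produces coincides with the $\li[1]$-structure on $W$ encoded by $\tilde\Theta$ and has the asserted Maurer-Cartan set. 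Since Proposition \ref{keyli} is stated for an arbitrary $\li[1]$-algebra $W$ with arbitrary corresponding coderivation, this is immediate once one observes that $\tilde\Theta$ is itself the coderivation attached to the $\li[1]$-structure it defines; the convergence of the Maurer-Cartan sums is, as noted, either assumed away or handled by working over $\RR[[\varepsilon]]$ via Lemma \ref{lem:eps}.
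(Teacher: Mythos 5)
Your proposal is correct and follows essentially the same route as the paper, which obtains this corollary precisely by applying Cor.~\ref{pairMC} with $\Theta=0$, restricting to $Ker(P)=\{\tau\in Coder(SW):\tau(1)=0\}$ via Rem.~\ref{kerPa}, and using Prop.~\ref{keyli} (together with the fact that $\mathcal{J}$ is injective and bracket-preserving, Lemma~\ref{Jbrapre}, and Lemma~\ref{homotayl}) to translate the two conditions $[\mathcal{J}\tilde\Theta,\mathcal{J}\tilde\Theta]=0$ and $\tilde\Phi\in MC(\ga^P_{\mathcal{J}\tilde\Theta})$ into the stated ones on $W$. Your closing observation that Prop.~\ref{keyli} applies verbatim to the perturbed coderivation $\mathcal{J}\tilde\Theta$, and that convergence is handled formally via Lemma~\ref{lem:eps}, matches the paper's (much terser) treatment.
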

One can show that the image of the embedding  $\mathcal{J}$ is exactly $\{\tau\in Coder(SW):\tau(1)=0\}$, so  Cor. \ref{cor:mcli} is a statement about \emph{all} Maurer-Cartan elements of $\{\tau\in Coder(SW):\tau(1)=0\}[1]\oplus W$.

\subsection{$L_{\infty}$-algebra morphisms}
\label{HomotLiemorph}

{We consider deformations of a pair of arbitrary $L_{\infty}[1]$-algebras and of a $L_{\infty}[1]$-morphism between them.
%are governed by a $L_{\infty}[1]$-algebra.
We show that  deformations of the morphism with fixed $L_{\infty}[1]$-algebra structures  are ruled by a $L_{\infty}[1]$-algebra (this follows also from Shoikhet's work, see \cite[\S 3]{BorisGauge}\cite{Hinich:2001}), 
and then show that there is an $L_{\infty}[1]$-algebra governing arbitrary deformations.}

We will use the following notation. When $E$ and $F$ are two vector spaces, we will denote by $L(E,F)$ the set of linear maps from E to F and use $L(E):=L(E,F)$ when $E=F$.

 Let $U$ and $V$ be two graded vector spaces. Denote
$\overline{S(U\oplus V)}:=\oplus_{k\ge 1}S^k(U\oplus V)$.
Let
%One has the decomposition $$S^n(U\oplus V)=\bigoplus_{q+r=n} S^qU\otimes S^rV,$$ and therefore one can consider the projection $\Pi^{q,r}$ onto $S^qU\otimes S^rV$. One considers also the canonical projection $\Pi_U$ (resp $\Pi_V$) from $U\oplus V$ onto $U$ (resp $V$).
\begin{equation}
L := L\left(\overline{S(U\oplus V)}, U\oplus V\right) = {\prod_{i\ge 1}\bigoplus_{q+r=i}}L^{q,r}_U\oplus L^{q,r}_V \label{class!},
\end{equation}
where
$$L^{q,r}_U:= \left\{\Pi_U\circ l\circ  \Pi^{q,r}: l\in L(S^{q+r}(U\oplus V), U\oplus V)\right\}$$
for $\Pi^{q,r} \colon  S^{q+r}(U\oplus V) \to S^qU\otimes S^rV$ and 
 $\Pi_U \colon U\oplus V \to U$ the
 canonical projections. 
%Define $$L:=\oplus_{n\ge 0}L^n(U\oplus V).$$
Consider the subspace $$\mathfrak{a}:={\prod_{q\ge {1}}}  L^{q,0}_V\cong L(\overline{SU},V).$$ Thanks to the decomposition (\ref{class!}) one has a projection $P \colon L \to \mathfrak{a}$. 
Notice that the vector space $L$ has a natural $\ZZ$-grading: $L= \oplus_{n\in \mathbb{Z}}L_n$,  where  a map $l \colon \overline{S(U\oplus V)} \to U\oplus V$ lies in $L_n$ if it raises the degree by $n$.
 
As remarked by Stasheff \cite{StasIntr}, L is a graded Lie algebra: the isomorphism of graded vector spaces  
\begin{equation}\label{eq:a8}
L \cong Coder({\overline{S(U\oplus V))}}
\end{equation}
given in Proposition III.2.1 in \cite{AMM}  allows to define the Lie bracket on $L$, the {\it Nijenhuis-Richardson bracket}, as the pullback  of the graded commutator of coderivations.    
\begin{prop}\label{Vsym}
Let $U$ and $V$ be two graded vector spaces equipped with   $L_\infty[1]$-algebra structures $\mu=(\mu_i)_{i\ge 1}$ and  $\nu=(\nu_j)_{j\ge 1}$,  where $\mu_i\in L^{i,0}_U$ and $\nu_j\in L^{0,j}_V$.
The following quadruple (with the previous notations) forms a V-data: 
\begin{itemize}
\item the graded Lie algebra $L$,
\item its abelian subalgebra $\ga$, 
\item the   projection 
%\begin{align*}
$P \colon L \to \ga$,
%$$L=\ga\oplus \{\Theta\in CoDer(SW):\Theta(1)=0\}.$$
% \end{align*}
\item $\Delta:=\mu +\nu$.
\end{itemize}
\end{prop}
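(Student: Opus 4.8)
The strategy is to verify the four defining conditions of a (non-curved) V-data from Definition \ref{vdata} for the quadruple $(L,\ga,P,\Delta)$. The graded Lie algebra structure on $L$ is already in place via the identification \eqref{eq:a8} with $Coder(\overline{S(U\oplus V)})$ and the Gerstenhaber bracket, so three tasks remain: (i) $\ga$ is an abelian Lie subalgebra; (ii) $\ker P$ is a Lie subalgebra; (iii) $\Delta\in\ker(P)_1$ and $[\Delta,\Delta]=0$.

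First I would handle (i). Elements of $\ga=\prod_{q\ge 1}L^{q,0}_V\cong L(\overline{SU},V)$ are coderivations whose ``co-Taylor coefficients'' land in $V$ and consume only symmetric words in $U$; since their outputs lie in $V$ while they only read $U$-inputs, the composite of two such coderivations vanishes, so $[\ga,\ga]=0$ and $\ga$ is an abelian subalgebra. For (ii), I would describe $\ker P$ concretely: by the decomposition \eqref{class!}, $P$ picks out the $L^{q,0}_V$ components, so $\ker P=\big(\prod_{q,r}L^{q,r}_U\big)\oplus\big(\prod_{q\ge 0, r\ge 1}L^{q,r}_V\big)$, i.e.\ all components except those mapping pure $U$-words into $V$. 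The cleanest way to see this is closed under the Gerstenhaber bracket is to interpret it geometrically/combinatorially: $\ker P$ consists exactly of those coderivations of $\overline{S(U\oplus V)}$ that are ``tangent to $U$'' in the sense that when restricted to $\overline{SU}$ they take values in $S(U\oplus V)$ with at least one $V$-factor \emph{unless} they already output into $U$ — equivalently, coderivations preserving the ideal generated by $V$ together with those producing $U$-outputs. One checks that the composition of two coderivations of this type is again of this type by bookkeeping the $U$- and $V$-valences, exactly as in the proofs of Lemmas \ref{keyLA} and \ref{keyLAbi}; this is most transparent in coordinates, or by the tangency characterization analogous to the one used there.

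For (iii): $\Delta=\mu+\nu$ where $\mu_i\in L^{i,0}_U$ encodes the $L_\infty[1]$-structure on $U$ and $\nu_j\in L^{0,j}_V$ that on $V$. Each $\mu_i$ outputs into $U$, hence lies in the $\prod L^{q,r}_U$ summand of $\ker P$; each $\nu_j$ consumes only $V$-words ($q=0$, $r\ge 1$), hence lies in the $\prod_{r\ge 1}L^{q,r}_V$ summand of $\ker P$. Thus $\Delta\in\ker P$, and it has degree $1$ since all multibrackets of an $L_\infty[1]$-algebra have degree $1$. Finally $[\Delta,\Delta]=[\mu,\mu]+2[\mu,\nu]+[\nu,\nu]$; the cross term $[\mu,\nu]$ vanishes because $\mu$ only touches the $U$-slot and $\nu$ only the $V$-slot (they involve disjoint sets of ``variables''), while $[\mu,\mu]=0$ and $[\nu,\nu]=0$ are precisely the $L_\infty[1]$-relations for $U$ and $V$ expressed via the Gerstenhaber bracket (the codifferential condition). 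Hence $[\Delta,\Delta]=0$ and the quadruple is a V-data.

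\textbf{Main obstacle.} The genuinely non-routine point is (ii), showing $\ker P$ is closed under the Gerstenhaber bracket. Unlike the vanishing check for $\ga$ and the disjoint-variables argument for $[\mu,\nu]$, this requires understanding how composition of coderivations of $\overline{S(U\oplus V)}$ mixes the bigrading $(q,r)$ and why the ``forbidden'' pure-$U$-into-$V$ component is never created from the allowed components. I expect the slickest route is the tangency/ideal-preservation reformulation (parallel to the remark in the proof of Lemma \ref{keyLA} that $\ker P$ consists of vector fields tangent to a sub-supermanifold), which reduces the claim to the fact that the coderivations tangent to a given graded subspace, together with those valued in a complementary one in the appropriate sense, form a subalgebra; everything else is straightforward bookkeeping.
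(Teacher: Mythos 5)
Your proposal is correct and follows essentially the same route as the paper, which simply declares the proof "analogous to the proof of Lemma \ref{keyAA}": you verify that $\ga$ is abelian by input/output incompatibility, that $\ker P$ is closed under the Gerstenhaber bracket via the same bigrading bookkeeping (your decomposition of $\ker P$ matches the $A\oplus B$ splitting used there, and your tangency reformulation is the coderivation analogue of the remark in Lemma \ref{keyLA}), and that $[\Delta,\Delta]=0$ using the vanishing of the cross term together with the codifferential condition of Lemma \ref{homotayl}. No gaps beyond the level of detail the paper itself leaves to the reader.
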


\begin{proof}
To see that $\mathfrak{a}$ is an abelian graded Lie subalgebra of $L$, remark that
 elements of $\mathfrak{a}$ are maps which produce vectors in $V$ and accept only terms in $U$. Therefore their composition is zero. 

Next we show that $KerP$ is a graded Lie subalgebra of L. To this aim use the decomposition $KerP=A\oplus B$ where
 \begin{align*}
A_n&=\bigoplus_{s+r=n, r>0}L^{s,r}_{V[1]}, \\
 B_n&=\bigoplus_{s+r=n}L^{s,r}_{U[1]}.
\end{align*}
Let $\alpha, \alpha'\in A, \beta \in B$ and $\gamma \in Ker P$. One has
$\alpha \circ \beta, \alpha\circ \alpha'  \in A$ and $\beta\circ \gamma \in B$, showing that $KerP=A\oplus B$ is closed under the Nijenhuis-Richardson bracket.
Further since $\nu\in A$ and $\mu \in B$, one has  $\Delta \in Ker P$.

Last we show that  $[\Delta,\Delta]=0$. Indeed,
$$[\Delta,\Delta]=[\mu, \mu] +[\nu, \nu] +2 [\mu, \nu] .$$
Since $\mu$ and $\nu$ are $L_\infty[1]$ algebras,  they can be characterized by the vanishing of $[\mu ,\mu]$ and $[\nu , \nu]$ (see \cite{AMM} section $IV.1$). %(Proposition \ref{2coderalg}) 
Now, by definition of the bracket, $$[\mu ,\nu]_n(x_1\dots x_n)=\sum_{I\amalg J=[n]}\pm \mu_{\vert J\vert+1} (\nu_{\vert I\vert}(x_I)\cdot x_J)\pm \nu_{\vert J\vert+1} (\mu_{\vert I\vert}(x_I)\cdot x_J)$$ but $\mu$ accepts only terms in $V$, whereas $\nu$ produces elements in $U$, hence the first summand of the right hand side vanishes. Similarly for the second summand. This concludes the proof that 
$(L,\ga,P,\Delta)$ forms a V-data.\end{proof}

\begin{prop} \label{carac}
$\Phi \in MC(\mathfrak{a}^P_\Delta)\Leftrightarrow \Phi$ is a morphism of $L_\infty[1]$-algebras.

\end{prop}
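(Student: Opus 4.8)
The plan is to unwind the derived-bracket description of $\mathrm{MC}(\mathfrak{a}^P_\Delta)$ provided by Theorem \ref{voronovderived} and Remark \ref{MCexp}, and match it term-by-term with the defining equations of an $L_\infty[1]$-morphism. By Remark \ref{MCexp}, an element $\Phi\in\ga_0$ is a Maurer--Cartan element of $\mathfrak{a}^P_\Delta$ if and only if $P_\Phi\Delta=0$, i.e. $P\big(e^{[\cdot,\Phi]}(\mu+\nu)\big)=0$. Since $P$ is the projection onto $\ga\cong L(\overline{SU},V)$ coming from the decomposition \eqref{class!}, this says precisely that the $L^{q,0}_V$-component of $e^{[\cdot,\Phi]}(\mu+\nu)$ vanishes for every $q\ge 1$.

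The key computational step is to identify $e^{[\cdot,\Phi]}(\mu+\nu)$ under the isomorphism \eqref{eq:a8} $L\cong Coder(\overline{S(U\oplus V)})$. First I would recall, using Proposition \ref{Taylor} and the fact that the Gerstenhaber bracket is the pullback of the commutator of coderivations, that the coderivation associated to $\Phi$ exponentiates on the cofree side to the (co)morphism $e^\Phi\colon \overline{S(U\oplus V)}\to \overline{S(U\oplus V)}$; more precisely, because $\Phi\in\ga_0=L^{q\ge1,0}_V$ consists of maps $\overline{SU}\to V$, conjugation $e^{[\cdot,\Phi]}$ on coderivations corresponds to conjugation by the coalgebra automorphism $F:=e^{\Phi}$ of $\overline{S(U\oplus V)}$ that is the identity on the $U$-part and adds $\Phi$ on the $V$-part. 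Hence $e^{[\cdot,\Phi]}(\mu+\nu)$ is the coderivation $F^{-1}\circ(\mu+\nu)\circ F$ (suitably corided), and its component in $L^{q,0}_V=L(\overline{SU},V)$ is exactly $\Pi_V\circ(\mu+\nu)\circ F\big|_{\overline{SU}} - F^{\Phi}\text{-correction}$ — i.e. the standard expression whose vanishing is the statement that $\Phi\colon (\overline{SU},\mu)\to(\overline{SV},\nu)$ is a morphism of coalgebras intertwining the codifferentials. Spelling this out on a generator $u_1\cdots u_n\in S^nU$ yields, on one side, $\sum \pm\, \nu_k\big(\Phi(\cdots)\otimes\cdots\otimes\Phi(\cdots)\big)$ summed over partitions, and on the other side $\sum\pm\,\Phi\big(\mu_j(\cdots)\otimes\cdots\big)$ — which is the classical defining identity of an $L_\infty[1]$-morphism.

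I would organize the write-up as: (1) invoke Remark \ref{MCexp} to reduce to $P_\Phi\Delta=0$; (2) compute $P_\Phi\Delta$ by transporting everything through \eqref{eq:a8} and Proposition \ref{Taylor}, using that $e^{[\cdot,\Phi]}$ on $L$ corresponds to conjugation by the coalgebra map $e^\Phi$; (3) evaluate the resulting element of $L(\overline{SU},V)$ on an arbitrary symmetric word in $U$ and recognize the $L_\infty[1]$-morphism equations (referring to \S\ref{sec:idli} and Lemma \ref{homotayl} for the bookkeeping of Koszul signs and the cofree coalgebra structure). The main obstacle I expect is purely combinatorial/sign-theoretic: checking that the partitions and Koszul signs produced by the Gerstenhaber bracket and the exponential $e^{[\cdot,\Phi]}$ match exactly those in the standard definition of an $L_\infty[1]$-morphism, with no stray terms — in particular verifying that all contributions of $e^{[\cdot,\Phi]}(\mu+\nu)$ lying outside $\ga$ are irrelevant to the equation $P_\Phi\Delta=0$, and that the $\mu$-dependent and $\nu$-dependent pieces assemble correctly. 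Since the analogous bookkeeping has already been carried out in the associative case (Lemma \ref{keyAA}) and for $L_\infty[1]$-structures themselves (Proposition \ref{keyli}), I would streamline by citing those computations and only highlighting the modifications needed for morphisms.
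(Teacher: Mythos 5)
Your plan is correct and follows essentially the same route as the paper: reduce to $P_\Phi\Delta=0$ via Remark \ref{MCexp}, pass to coderivations through the isomorphism \eqref{eq:a8} so that $e^{[\cdot,\bar\Phi]}$ becomes conjugation by $e^{\bar\Phi}$ (the paper expands $ad_{\bar\Phi}^n(\tau)=\sum_{k+l=n}(-1)^k\binom{n}{k}\bar\Phi^k\tau\bar\Phi^{l}$ by hand instead of invoking the coalgebra automorphism), and then match the surviving $\ga$-component with the $L_\infty[1]$-morphism identity \eqref{morp}. The ``bookkeeping'' you defer is exactly where the paper's work lies (its Lemma \ref{technique} and Claims \ref{lem1}--\ref{lem2}, showing that only the $l=n$ term of the $\nu$-part and the $k=n=1$ term of the $\mu$-part survive the projection), but your outline contains all the ingredients needed to carry it out.
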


\begin{proof}
Fix $\Phi \in \ga_0$. Our aim is to show that the condition for $\Phi$ to be a Maurer-Cartan element for the  $L_\infty[1]$-algebra $\mathfrak{a}^P_\Delta$ (see Remark \ref{MCexp}), 
$$Pe^{[-,\Phi]}(\mu+\nu)=0,$$  is equivalent to the condition for $\Phi$ to be a morphism of $L_\infty[1]$-algebras, i.e., for all $s\ge 1$  and $u_1,\dots,u_s\in U$:
\begin{equation}
\sum_{I\amalg J=[s]}\Phi_{\vert J\vert +1}(\mu_{\vert I\vert}(U_{I})\cdot U_{J})={\sum_{n=1}^s\frac{1}{n!}}\sum_{I_1\amalg\dots\amalg I_{n}=[s]}\nu_n(\Phi_{\vert I_1\vert}(U_{I_1})\dots \Phi_{\vert I_n\vert}(U_{I_n})),\label{morp}
\end{equation}
where $[s]:=\{1, \dots ,s\}$, $\amalg$ means disjoint union and $U_I=u_{\alpha_1}\dots u_{\alpha_j}$ when $I=\{\alpha_1, \dots, \alpha_j\}.$  Some of the $I_i$'s in the expression $I_1\amalg\dots\amalg I_{n}=[s]$ can be empty. One will use the convention that $\Phi_{\vert \emptyset\vert}(U_{\emptyset})=0$ and $U_I\cdot U_\emptyset=U_I.$
Here we decompose $\Phi$ as a sum of its homogeneous elements with respect to the polynomial degree, i.e.  $\Phi=\sum \Phi_n$ where $\Phi_n \in L^{n,0}_V$.
% In particular $U_{[s]}=u_{1}\dots u_{s}.$\\

It will be convenient to   {use the isomorphism \eqref{eq:a8}}
% provided by} Proposition \ref{Taylor}
to
view the elements of $L$ as coderivations, because in this case the Lie bracket is the graded commutator. The coderivation corresponding to $\Phi$  (resp. to $\mu$,  $\nu$) will be denoted by $\bar \Phi$  (resp. $\bar \mu$,  $\bar \nu$). An explicit expression is given by (cf prop III.2.1 \cite{AMM})
\begin{equation}
\bar Q (x_1\dots x_n):=\sum_{I\amalg J=[n]} \epsilon_x(I,J) Q_{\vert I\vert }(x_I)\cdot x_J \label{sumtaylor}
\end{equation}
where $I\amalg J$ denotes a disjoint union of {\it non-empty} sets and $\epsilon_x(I,J)$ denotes the sign obtained by applying the koszul sign rule to the action of the permutation $[n]\rightarrow I\amalg J$ on the graded elements $x_i$. In the sequel we will omit this sign, but it is understood to be there unless otherwise stated via a $\pm$ sign.

 $\Phi$ is a Maurer-Cartan element of the $L_\infty[1]$-algebra $\mathfrak{a}^P_\Delta$ if{f}  \begin{equation*} 
Pe^{[-,\bar{\Phi}]}(\bar\mu+\bar\nu)=0.
\end{equation*}
But, with the notation {$ad_{\Phi}:=[-,\Phi]$},  one has $$e^{[-,\bar{\Phi}]}=\sum_{n\ge 0}\frac{1}{n!} {ad_{\bar{\Phi}}}^n,$$ and one can compute  $ {ad_{\bar{\Phi}}}^n(\bar\mu)$ and $ {ad_{\bar{\Phi}}}^n(\bar\nu)$ with the expansion$$ {ad_{\bar{\Phi}}}^n(\tau)=\sum_{k+l=n}(-1)^k{{n \choose k}} \bar{\Phi}^k\tau\bar{\Phi}^{l}.$$
%{for $\tau$ equal to $\bar\nu$ or $\bar\mu$.}
Therefore everything boils down to compute terms of the form $$\bar{\Phi}^k\tau\bar{\Phi}^{l}(u_1\dots u_s).$$
The results of these computations for  $\tau=\bar\nu$ and $\tau=\bar\mu$   with $n=k+l$ are claims \ref{lem1}  and \ref{lem2} respectively, and give the two sides of the equation \eqref{morp}.
% defining a morphism of $L_\infty[1]$ algebras.

\begin{clm}\label{lem1} The term $$pr_V({\bar{\Phi}}^{k}\circ \bar\nu\circ{\bar{\Phi}}^{l}(U_{[s]}))$$ always vanishes except  for $l=n$ for which one has  
$$pr_V({\bar{\Phi}}^{0}\circ \bar\nu\circ{\bar{\Phi}}^{n}(U_{[s]}))=\sum_{I_1\amalg\dots\amalg I_{n}=[s]}\bar\nu_n(\Phi_{\vert I_1\vert}(U_{I_1})\dots \Phi_{\vert I_n\vert}(U_{I_n})).$$
\end{clm}

\begin{clm}\label{lem2}
The term $$pr_V({\bar{\Phi}}^{k}\circ \bar\mu\circ{\bar{\Phi}}^{l}(U_{[s]}))$$ always vanishes, except for  $k=n=1$ for which one has 
$$pr_V({\bar{\Phi}}^{1}\circ \bar\mu(U_{[s]}))=\sum_{I\amalg J=[s]}\Phi_{\vert J\vert +1}(\mu_{\vert I\vert}(U_{I})\cdot U_{J}).$$
\end{clm}
Combining the results of claims \ref{lem1} and \ref{lem2} finishes the proof of Proposition \ref{carac}.
\end{proof}

 We now state a lemma and use it to prove claims \ref{lem1} and \ref{lem2}. 
All along we fix $s\ge 1$ and $u_1,\dots,u_s\in U$.
\begin{lem}\label{technique}For all $t\ge 0$
\begin{equation}
{\bar{\Phi}}^{t}(U_{[s]})=\sum_{I_1\amalg\dots\amalg I_{t+1}=[s]}\Phi_{\vert I_1\vert}(U_{I_1})\dots \Phi_{\vert I_t\vert}(U_{I_t})\cdot U_{I_{t+1}}.\label{phin}
\end{equation}
\end{lem}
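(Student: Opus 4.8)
The plan is to prove formula \eqref{phin} by induction on $t$, using the explicit description of the coderivation $\bar\Phi$ associated to $\Phi\in\ga=\prod_{q\ge 1}L^{q,0}_V$ via the isomorphism \eqref{eq:a8} (Proposition \ref{Taylor}). For the base case $t=0$ the right-hand side is the single summand corresponding to $I_1=\emptyset$, $I_2=[s]$, which by the convention $\Phi_{|\emptyset|}(U_\emptyset)=0$ is... no: rather, for $t=0$ there are no $\Phi$-factors at all and the right-hand side is just $U_{[s]}$ (the partition $[s]=\emptyset\amalg[s]$ with the first block consumed trivially), so the identity reads $\bar\Phi^0(U_{[s]})=U_{[s]}$, which is trivial.

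For the inductive step, I would assume \eqref{phin} holds for some $t\ge 0$ and apply $\bar\Phi$ to both sides. The key input is the coproduct formula: $\bar\Phi$ is the unique coderivation of $\overline{S(U\oplus V)}$ whose corulation with the projection $\mathrm{pr}_{U\oplus V}$ recovers the collection $(\Phi_q)_{q\ge 1}$, so on a product $w_1\cdots w_m\in \overline{S(U\oplus V)}$ it acts by $\bar\Phi(w_1\cdots w_m)=\sum \pm\,\Phi(\text{a sub-multiset of the }w_i)\cdot(\text{the complementary }w_j)$, the sum running over all nonempty sub-multisets. Applying this to $\Phi_{|I_1|}(U_{I_1})\cdots\Phi_{|I_t|}(U_{I_t})\cdot U_{I_{t+1}}$: since each factor $\Phi_{|I_j|}(U_{I_j})$ already lies in $V$ and $\bar\Phi$ vanishes on elements of $V$ and on products of elements of $V$ (because $\Phi$ maps $\overline{SU}$ to $V$, so any argument of a $\Phi_q$ must come from the $U$-block $U_{I_{t+1}}$), the only surviving terms are those where $\bar\Phi$ eats a sub-multiset of the variables in $U_{I_{t+1}}$. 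Writing $I_{t+1}=I'_{t+1}\amalg I_{t+2}$ with $I'_{t+1}$ the (nonempty) set eaten, one gets exactly $\Phi_{|I_1|}(U_{I_1})\cdots\Phi_{|I_t|}(U_{I_t})\,\Phi_{|I'_{t+1}|}(U_{I'_{t+1}})\cdot U_{I_{t+2}}$, summed over all partitions $[s]=I_1\amalg\cdots\amalg I_{t+1}\amalg I_{t+2}$ with $I_1,\dots,I_{t+1}$ playing symmetric roles. Relabelling, this is precisely the right-hand side of \eqref{phin} with $t$ replaced by $t+1$, completing the induction.

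The main obstacle I anticipate is bookkeeping of the Koszul signs: the coderivation formula carries signs coming from permuting the $V$-valued outputs $\Phi_{|I_j|}(U_{I_j})$ past the remaining $U$-variables, and one must check these are absorbed consistently into the (graded) symmetric-algebra conventions so that the right-hand side of \eqref{phin} is sign-coherent as written. Since all the maps $\mu_i,\nu_j,\Phi_q$ and the brackets have degree $1$ in the $[1]$-shifted conventions, and the symmetric algebra $S(U\oplus V)$ is graded-symmetric, the signs should organize themselves exactly as in the standard proof that a coderivation of a cofree cocommutative coalgebra is determined by its corestriction; I would simply cite or mirror that computation rather than reproduce it. A secondary minor point is the degenerate-block convention ($\Phi_{|\emptyset|}(U_\emptyset)=0$, $U_I\cdot U_\emptyset=U_I$): one must make sure that in \eqref{phin} only the last block $I_{t+1}$ is allowed to be empty among the "$U$-type" slots while the $I_1,\dots,I_t$ are forced nonempty — this is automatic because $\Phi$ has no zero-ary component, so any empty $I_j$ with $j\le t$ contributes $\Phi_{|\emptyset|}(U_\emptyset)=0$ and drops out.
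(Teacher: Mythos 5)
Your proof is correct and follows essentially the same route as the paper's: the paper's own argument is precisely to apply the coderivation formula \eqref{sumtaylor} $t$ times, observing that since $\Phi$ admits only elements of $U$, no nested terms $\Phi(\Phi(U_I)\cdot U_{I'})$ can arise, with $t=0$ handled by convention. Your inductive phrasing and your remarks on the degenerate-block convention and on the Koszul signs being absorbed by the symmetric-algebra notation are just a more explicit rendering of that same argument.
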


\begin{proof}
Apply formula (\ref{sumtaylor}) $t$ times and remark that since $\Phi$ admits only elements in $U$, terms of the form $\Phi(\Phi(U_I)\cdot U_{I'})$ can not appear in the obtained expression. {The case $t=0$ is a convention}.
\end{proof}

\begin{proof}[Proof of claim \ref{lem1}]
We first compute $\bar\nu\circ{\bar{\Phi}}^{l}(U_{[s]})$. We therefore apply the formula (\ref{sumtaylor}) to $\bar\nu$ evaluated on the right hand side of the equation (\ref{phin}), with $t=l$ to get 
$${\sum \nu_{\vert I_{l+1}\vert+j}(\Phi_{\vert I_{\alpha_1}\vert}(U_{I_{\alpha_1}}) \dots  \Phi_{\vert I_{\alpha_j}\vert}(U_{I_{\alpha_j}})\cdot U_{I_{l+1}})\cdot\Phi_{\vert I_{\beta_1}\vert}(U_{I_{\beta_1}}) \dots \Phi_{\vert I_{\beta_k}\vert}(U_{I_{\beta_k}})\cdot U_{I_{l+2}}},$$
where $\{\alpha_1,\dots, \alpha_j\}=J$ and $\{\beta_1,\dots, \beta_k\}=K$, and the sum is over $I_1\amalg\dots\amalg I_{l+2}=[s]$ and $J\amalg K=[l].$

Now, since $\nu$ admits only elements in $U$, the term $U_{I_{l+1}}$ must be absent in the previous expression, i.e. one has
$$\bar\nu\circ{\bar{\Phi}}^{l}(U_{[s]})=\sum \nu_{\vert J\vert}(\Phi_{\vert I_{\alpha_1}\vert}(U_{I_{\alpha_1}})\dots \Phi_{\vert I_{\alpha_j}\vert}(U_{I_{\alpha_j}}))\cdot\Phi_{\vert I_{\beta_1}\vert}(U_{I_{\beta_1}})\dots \Phi_{\vert I_{\beta_k}\vert}(U_{I_{\beta_{k}}})\cdot U_{I_{l+1}},$$
(sum over
$I_1\amalg\dots\amalg I_{l+1}=[s], J\amalg K=[l]).$

We are interested in evaluating the expression ${\bar{\Phi}}^{k}\circ \bar\nu\circ{\bar{\Phi}}^{l}(U_{[s]})$, with $k+l=n$. By applying Lemma \ref{technique} with $t=k$ to the last expression, and by the fact that $\Phi$ admits only terms in $U$, one gets

$${\bar{\Phi}}^{k}\circ\bar \nu\circ{\bar{\Phi}}^{l}(U_{[s]})=\sum \nu_{\vert J\vert}(\Phi_{\vert I_{\alpha_1}\vert}(U_{I_{\alpha_1}})\dots \Phi_{\vert I_{\alpha_j}\vert}(U_{I_{\alpha_j}}))\cdot\Phi_{\vert I_{\beta_1}\vert}(U_{I_{\beta_1}})\dots \Phi_{\vert I_{\beta_k}\vert}(U_{I_{\beta_{k}}})\cdot U_{I_{n+1}}.$$
(sum over $I_1\amalg\dots\amalg I_{n+1}=[s]; J\amalg K=[n]$).

Finally, if one considers the terms in the above formula which belong to $V$, one has
$$pr_V({\bar{\Phi}}^{k}\circ \bar\nu\circ{\bar{\Phi}}^{l}(U_{[s]}))=\sum_{I_1\amalg\dots\amalg I_{n}=[s]}\nu_n(\Phi_{\vert I_1\vert}(U_{I_1})\dots \Phi_{\vert I_n\vert}(U_{I_n})).$$
\end{proof}

\begin{proof}[Proof of claim \ref{lem2}]
We start with evaluating $\bar\mu\circ{\bar{\Phi}}^{l}(U_{[s]})$. We apply the formula (\ref{sumtaylor})  to $\bar\mu$ evaluated on the right hand side of the equation (\ref{phin}), with $t=l$ and remark that  since $\mu$ admits only elements in $U$, terms of the form $\mu(\Phi(U_I)\cdot U_{I'})$ can not appear in the obtained expression. Therefore one has
$$ \bar\mu\circ{\bar{\Phi}}^{l}(U_{[s]})=\sum_{I_1\amalg\dots\amalg I_{l+2}=[s]}\Phi_{\vert I_1\vert}(U_{I_1})\dots \Phi_{\vert I_l\vert}(U_{I_l})\cdot \mu_{\vert I_{l+1}\vert}(U_{I_{l+1}})\cdot U_{I_{l+2}}.$$
We now evaluate ${\bar{\Phi}}^{k}\circ \bar\mu\circ{\bar{\Phi}}^{l}(U_{[s]})$ by applying Lemma \ref{technique} to the previous expression, with $t=k$. Since  $\Phi$ admits only elements in $U$, terms of the form $\Phi(\Phi(U_I)\cdot U_{I'})$ can not appear in the obtained expression. Hence one gets 
(remember that $n=k+l$)
\begin{align*}
&\sum_{I_1\amalg\dots\amalg I_{n+2}=[s]} \pm \Phi_{\vert I_1\vert}(U_{I_1})\dots \Phi_{\vert I_n\vert}(U_{I_n})\cdot \mu_{\vert I_{n+1}\vert}(U_{I_{n+1}})\cdot U_{I_{n+2}} \\ 
+& {\sum_{I_1\amalg\dots\amalg I_{n+2}=[s]} \pm \Phi_{\vert I_1\vert}(U_{I_1})\dots \Phi_{\vert I_{n}\vert+1}( U_{I_{n}}\cdot  \mu_{\vert I_{n+1}\vert}(U_{I_n+1}))\cdot U_{I_{n+2}} }.
\end{align*} 
In the previous expression, there are terms which belong to V only if $n$=$k$=$1$. In this case one has
$$pr_V({\bar{\Phi}}\circ \bar\mu(U_{[s]}))=\sum_{I\amalg J=[s]}\Phi_{\vert J\vert +1}(\mu_{\vert I\vert}(U_{I})\cdot U_{J}).$$
\end{proof}

Prop. \ref{Vsym} {and Prop. \ref{carac}} allow us to apply    Thm. \ref{machine} (and Rem. \ref{rem:1)}) and deduce:
\begin{cly}\label{cor:limorf}
Let $U,V$ be $\li[1]$-algebras and ${\Phi}\in L({ \overline{SU},V)}$ a $\li[1]$-morphism from $U$ to $V$ {and} let $(L,\ga,P,\Delta)$ as in Prop. \ref{Vsym}.
\begin{itemize}
\item[1)]
% Let $\phi \colon U \rightsquigarrow V$ be an $L_{\infty}$-morphism and
Let $\tilde{\Phi}\in L_0({ \overline{SU},V)}=\ga_0$.   Then $$\Phi +\tilde{\Phi} \text{ is an $L_{\infty}{[1]}$-morphism }
 \;\;\;\Leftrightarrow \;\;\;
 \tilde{\Phi} \in MC(\ga_{\Delta}^{P_{\Phi}}).$$

\item[2)]
%  Consider the
%$L_{\infty}$-algebra structure on $L[1]\oplus \ga$ induced by $D:=[\Delta,\cdot]$ and $P_{\Phi}$ as in Thm. \ref{voronov}.
For all degree one coderivations $\tilde{Q}_U$ on $\overline{SU}$ and  $\tilde{Q}_V$ on  $\overline{SV}$  and for all     $\tilde{\Phi} \in L_0({ \overline{SU},V)}$:
\begin{align*}
 &\begin{cases}
Q_U+\tilde{Q}_U \text{ and } Q_V+\tilde{Q}_V \text{ define $L_{\infty}{[1]}$-algebra structures on } U,V \\
 \Phi+ \tilde{\Phi} \text{ is a $L_{\infty}[1]$-morphism  between these $L_{\infty}[1]$-algebra structures} 
 \end{cases}
 \\
 \Leftrightarrow \;\;
 &((\tilde{Q}_U +\tilde{Q}_V)[1], \tilde{\Phi}) \in MC((L[1]\oplus \ga)_{\Delta}^{P_{\Phi}})
\end{align*}
\end{itemize}
\end{cly}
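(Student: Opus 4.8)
The plan is to deduce Corollary~\ref{cor:limorf} as a direct application of the general machinery, exactly as was done for Lie algebras in Cor.~\ref{cor:LA}. First I would record that Prop.~\ref{Vsym} and Prop.~\ref{carac} together provide all the input data required by Thm.~\ref{machine}: Prop.~\ref{Vsym} gives the V-data $(L,\ga,P,\Delta)$ with $\Delta=\mu+\nu$, and Prop.~\ref{carac} identifies $MC(\ga_\Delta^P)$ with the set of $L_\infty[1]$-morphisms $U\to V$. Since $\Phi$ is assumed to be such a morphism, it is a Maurer--Cartan element of $\ga_\Delta^P$, so the hypotheses of Thm.~\ref{machine} are met, provided one also verifies that the V-data is filtered; I would either invoke the formal setting of Lemma~\ref{lem:eps} (as announced by the remark ``we will ignore all convergence issues'' in the analogous \S\ref{sec:MCli}) or exhibit the polynomial-degree filtration $\cF^nL:=\oplus_{k\ge n}L^k$ where $L^k$ collects the maps lowering the number of $U$-and-$V$ inputs by exactly $k+1$, just as in Remarks~\ref{rem:convla} and \ref{rem:convlimor}; one checks conditions a), b), c) of Def.~\ref{triple} routinely.

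For part 1), I would simply apply Rem.~\ref{rem:1)}: for $\tilde\Phi\in\ga_0$, $\Phi+\tilde\Phi\in MC(\ga_\Delta^P)$ iff $\tilde\Phi\in MC(\ga_\Delta^{P_\Phi})$. By Prop.~\ref{carac} the left-hand condition says precisely that $\Phi+\tilde\Phi$ is an $L_\infty[1]$-morphism, which gives the stated equivalence. For part 2), I would apply the full equivalence of Thm.~\ref{machine} with $\tilde\Delta:=\tilde Q_U+\tilde Q_V$ (viewed inside $L$ via the embedding $Coder(\overline{S(U\oplus V)})\hookrightarrow$ the relevant coderivation space, or rather via the identification \eqref{eq:a8}) and with $\tilde\Phi$ as given. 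The condition $[\Delta+\tilde\Delta,\Delta+\tilde\Delta]=0$ unpacks, using that $U$-coderivations and $V$-coderivations commute, into the two separate self-commuting conditions $[Q_U+\tilde Q_U,Q_U+\tilde Q_U]=0$ and $[Q_V+\tilde Q_V,Q_V+\tilde Q_V]=0$, i.e.\ that $Q_U+\tilde Q_U$ and $Q_V+\tilde Q_V$ define $L_\infty[1]$-structures; and the condition $\Phi+\tilde\Phi\in MC(\ga_{\Delta+\tilde\Delta}^P)$ is, again by Prop.~\ref{carac} applied to the deformed structures, the statement that $\Phi+\tilde\Phi$ is a morphism between them. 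Thm.~\ref{machine} then equates this package with $((\tilde Q_U+\tilde Q_V)[1],\tilde\Phi)\in MC\big((L[1]\oplus\ga)_\Delta^{P_\Phi}\big)$, which is exactly the claim.

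The only genuinely non-formal point, and hence the step I expect to need the most care, is the splitting of the self-commutation equation for $\Delta+\tilde\Delta$ into the two independent equations for the $U$- and $V$-structures. This requires knowing that $\tilde Q_U$ (a coderivation built from maps $S^{\ge1}U\to U$, extended to $\overline{S(U\oplus V)}$) and $\tilde Q_V$ land in complementary pieces of $L$ with respect to the bidegree by number of $U$- and $V$-slots, so that their Gerstenhaber bracket vanishes and the cross terms in $[\Delta+\tilde\Delta,\Delta+\tilde\Delta]$ drop out; this is the $L_\infty$ analogue of the observation in the proof of Cor.~\ref{cor:LA} that $[\tilde Q_U,\tilde Q_V]=0$ for vector fields on $U[1]$ and $V[1]$. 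Everything else is a bookkeeping translation through Prop.~\ref{carac}, so as in Prop.~\ref{Vsym} the detailed verification can reasonably be left to the reader; I would write the corollary proof as: ``This follows from Thm.~\ref{machine} and Rem.~\ref{rem:1)}, using Prop.~\ref{Vsym} and Prop.~\ref{carac}, exactly as Cor.~\ref{cor:LA} followed from Lemma~\ref{keyLA}.''
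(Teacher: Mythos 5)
Your proposal is correct and follows exactly the route the paper takes: the paper derives Cor.~\ref{cor:limorf} with no further argument beyond the sentence that Prop.~\ref{Vsym} and Prop.~\ref{carac} allow one to apply Thm.~\ref{machine} and Rem.~\ref{rem:1)}, with the filtration $\cF^nL:=\prod_{k\ge n}L^k$, $L^k:=L^{k+1,\bullet}_U\oplus L^{k,\bullet}_V$, checked in the remark immediately following the corollary. Your extra care about splitting $[\Delta+\tilde\Delta,\Delta+\tilde\Delta]=0$ via $[\tilde Q_U,\tilde Q_V]=0$ (which holds since $U$-coderivations only accept $U$-inputs and $V$-coderivations only produce $V$-outputs) is a correct filling-in of a step the paper leaves implicit.
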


\begin{rem} 
We have a direct product decomposition $L=\prod_{k \ge -1}L^k$ where
$L^k:= L^{k+1,\bullet}_U\oplus L^{k,\bullet}_V$. Here we use the short-hand notation $L^{k,\bullet}_V:=\prod_{r \ge 0}L^{k,r}_V$.  Then $\cF^nL:=\prod_{k\ge n}L^k$ is   a complete filtration of the vector space $L$. One checks easily that $(L,\ga, P,\Delta)$ is filtered V-data (Def. \ref{triple}).
%, so that Prop. \ref{prop:fil} applies, guaranteeing that there are no convergence issues in Prop. \ref{carac} and Cor. \ref{cor:limorf}.
\end{rem}

 \noindent\textbf{Acknowledgements:}  {We thank J. Stasheff for comments, and D. Iacono, M. Manetti, F. Sch\"atz, B. Shoiket, {B. Vallette}, T. Willwacher for useful conversations.}

\noindent M.Z.   thanks Uni.lu for hospitality (Luxembourg, 08/2010, grant FNR/10/AM2c/18). He was partially supported by CMUP (Porto), financed by FCT (programs POCTI, POSI and Ciencia 2007); grants  PTDC/MAT/098770/2008 and 
PTDC/MAT/099880/2008 (Portugal), MICINN RYC-2009-04065 and MTM2009-08166-E (Spain).
 
\noindent Most of the work of Y.F. on this article was done while assistant of Prof. Dr. Martin Schlichenmaier at Uni.lu  (grant R1F105L15), to whom he would like to address his warmest thanks. He benefited form the support of the UAM {through grant MTM2008-02686} (Madrid, 06/2010), and from the MPIM in Bonn (12/2011-01/2012).

\bibliographystyle{habbrv}
\bibliography{DerbibAlg}

\end{document}